\newtheorem{theorem}{Theorem}[section]
\newtheorem{lemma}[theorem]{Lemma}
\newtheorem{proposition}[theorem]{Proposition}
\newtheorem{corollary}{Corollary}
\newtheorem{definition}{Definition}
\newtheorem{example}{Example}
\newtheorem{question}{Question}
\newtheorem{conjecture}{Conjecture}
\newcommand{\LGR}{{{}^\mathcal{L}G^\mathcal{R}}}
\newcommand{\LG}{{{}^\mathcal{L}G}}
\newcommand{\GR}{{G^\mathcal{R}}}
\newcommand{\LHR}{{{}^\mathcal{L}H^\mathcal{R}}}
\newcommand{\LH}{{{}^\mathcal{L}H}}
\newcommand{\HR}{{H^\mathcal{R}}}
\newcommand{\SQ}{{SQ}}
\begin{document}

\title{Simultaneous Combinatorial Game Theory\thanks{The first author is supported by the Natural Sciences and Engineering Research Council of Canada and the Killam Trust. The second author is supported by the Natural Sciences and Engineering Research Council of Canada.}}

\author{Melissa Huggan and
        Richard Nowakowski, Dalhousie University\\
               Paul Ottaway,
                  Capilano University%etc.
}

%\authorrunning{Short form of author list} % if too long for running head

\maketitle

\begin{abstract}

Combinatorial game theory (CGT), as introduced by Berlekamp, Conway \& Guy, involves two players who move alternately in a perfect
  information, zero-sum 
 game, and there are no chance devices. 
 Also the games have the finite descent property (every game terminates in a finite number of moves). 
The two players are usually called \textit{Left} and \textit{Right}.

The games often break up into components and the players must choose
one of the components in which to play. 
One main aim of CGT is to analyze the components individually (rather than 
analyzing the sum as a whole) then use this information to analyze the sum. 

In this paper, the players move simultaneously in a combinatorial game. Three sums are considered which are defined by the termination rules: (i) one component does not have a simultaneous move; (ii) no component has a simultaneous move; 
(iii) one player has no move in any component.
These are combined with a winning convention which is either: (i) based on which player has moves remaining; or (ii) the greatest score.
 In each combination, we show that equality of games induces an equivalence relation and the equivalence classes are partially ordered.
 Also, where possible, given games $A$ and $B$, we give checks to determine if Left prefers to replace $A$ by $B$ in a sum.

Keywords: Combinatorial Game Theory, Economic Game Theory, Simultaneous Combinatorial Game Theory, Disjunctive Sum, Conjunctive Sum, Continued Conjunctive Sum, Extended Normal Play and Scoring Play.
% \PACS{PACS code1 \and PACS code2 \and more}
% \subclass{MSC code1 \and MSC code2 \and more}
\end{abstract}

\section{Introduction}\label{intro}

Combinatorial game theory (CGT), as introduced by Berlekamp, Conway \& Guy \cite{Conway1976,BerleCG}
 (see also \cite{Albert2007,Siegel2013}), involves two players who move alternately in a perfect
  information, zero-sum
 game, with no chance devices, that has the finite descent property (every game terminates in a finite number of moves). The two players are usually 
 called \textit{Left} and \textit{Right}, where Left is female and Right is male.
 The \textit{normal} play winning convention has the last player to move as the winner. Recently, the theory has been extended to include games where 
 the winner is determined by a \textit{score} rather than who moves last \cite{Larsson2015}.

  In this paper, we examine  combinatorial games when players move simultaneously instead of alternately. In CGT, given a position $G$, the \textit{options} are those positions that can be reached in one move.
Left and Right options are denoted by the sets $G^\mathcal{L}$ and $G^\mathcal{R}$ respectively.
 The position $G$ can be written as $\{G^\mathcal{L}\mid G^\mathcal{R}\}$.
 In simultaneous play, a position, therefore, already has a set of Left options and Right options
 and we consider these as the basic 
(individual) moves. The rules and winning conventions must be extended to cover 
the simultaneous moves. \\

 In CGT, since these are games of pure strategy, the outcome of each game is determined,
  denoted by $o(G)$.
 There are four outcomes possible for a game:
 \[o(G)=\begin{cases}
 \mathcal{L}, \text{Left can force a win regardless of who starts;}\\
 \mathcal{R}, \text{Right can force a win regardless of who starts;}\\
  \mathcal{N}, \text{the Next player can force a win;}\\
   \mathcal{P}, \text{the Previous player can force a win.}\\
 \end{cases}
 \]
Since these games are zero-sum, two player games, the usual convention is followed, 
 that is, Left wins are positive and
  Right negative.
 The outcomes are partially ordered: from Left's point of view, she prefers $\mathcal{L}$
 over both $\mathcal{P}$ and $\mathcal{N}$, which are incomparable, and she prefers these over $\mathcal{R}$.
 
CGT is sometimes referred to as \emph{additive} game theory because such games often decompose
into components and a player must choose one and play in it. 
(For Maker-Maker games, for example, connection games like \textsc{hex}, see Beck~\cite{Beck}.)
This is formalized in the notion of disjunctive sum.

 \begin{definition}
The disjunctive sum of games $G$ and $H$ is 
\begin{center}
$G + H = \left\{G^{\mathcal{L}}+H, G+H^{\mathcal{L}} \mid G^{\mathcal{R}}+H, G+H^{\mathcal{R}}\right\}$.
\end{center}
\end{definition}

In normal play, the disjunctive sum $G+H$ can be analyzed by considering 
$G$ and $H$ separately and then combining the results rather than having to 
consider the total game \cite{BerleCG,Conway1976}. 
Now, play must alternate in $G+H$ but this 
alternation need not happen in $G$ and $H$ separately. It is possible that Left is happy to 
play three consecutive moves in $G$ whilst, each time,
 Right responds in $H$. No other sum creates as useful a
 structure as that of normal play and disjunctive sum. (Also see the extension to scoring games \cite{Larsson2015,Larsson2017}.)

The disjunctive sum also leads to the concept of equality (indifference) and produces a partial order on games. 
 \begin{def}\label{def: equality} Given positions $G$ and $H$,

\noindent \textit{Equality:} $G = H$ if $(\forall X)$  $o(G+ X)=o(H+X)$.

\noindent
\textit{Greater than:} $G \geq H$ if $(\forall X)$ $o(G+X)\geq o(H+X)$. 
\end{def}

The defined `$=$' relation is an equivalence relation on CGT games and the quotient is a partially 
ordered abelian group. \\

We are interested in simultaneous games that have components, how these components are played,
and extending the concepts of equality and inequality. Conway \cite{Conway1976} Chapter 14, defines a dozen different ways of combining positions. (See also \cite{BerleCG} Chapter 9, and \cite{FraenT,Guignard2009,Kane2010,Smith1966}).
We extend just three.
Consider the positions $G$ and $H$. In the \textit{disjunctive} sum, players must play in either $G$ or $H$. In the \textit{conjunctive} sum, the players must play in all components. Hence, in the conjunctive sum of $G$ and $H$, if a player has a move in $G$ but not in $H$ (or vice versa) the game is over. Lastly, in the \textit{continued conjunctive} sum, the players must play in all components where both players still have moves remaining.  

In simultaneous games, since the players move at the same time, the winning conventions cannot be based purely on who moves last, as in CGT. We define two conventions. The first depends on a \textit{non-losing condition}: if a player has a move in a component $C$ then that player cannot lose the whole game because of $C$. If, in a sum, a player has a move in 
each component then that player cannot lose on that turn. 
The second winning convention is determined by a score which is assigned at the end of the game, i.e., at a \textit{terminal} position. 
Both conventions allow a \textit{Draw} as an outcome.

There are three possible outcomes of a terminal position of a simultaneous combinatorial game: 
  \[o_{S}(G)=\begin{cases}
 \mathcal{L}, \text{Left wins;}\\
 \mathcal{R}, \text{Right wins;}\\
  \mathcal{D}, \text{Draw otherwise.}\\
 \end{cases}
 \]
In keeping with the two-player, zero-sum conventions, we will order the outcomes $\mathcal{L}>\mathcal{D}>\mathcal{R}$.
\\

 Given a game $G$, in CGT, there are operations that, when applied repeatedly, result in a game $H$ where (i) $G=H$; (ii) $H$ has the smallest game tree of all games equal to $G$; and (iii) $o(G+X) = o(H+X)$. One of these operations is to eliminate dominated, or one of two equal, strategies\footnote{The other, reversing reversible options, is particular to alternating play, and has no analogue in simultaneous play.}. However, eliminating one of two equal strategies can cause problems in sums of simultaneous games.

In Section~\ref{basic concepts}, we introduce the concepts required to analyze the games, including evaluations of \emph{expected value}. 
When analyzing a two-player, zero-sum game, dominated strategies can be eliminated without changing the expected value.
In Section~\ref{sums of simultaneous games: normal}, we consider the different sums under the non-losing condition. In each case, we show that the expected value of
 the sum of $G$ and $H$ is not the same as eliminating  dominated strategies in $G$ and in $H$ then taking the sum.
 It is an open question as to what reductions may be applied to $G$ and $H$ so that the calculations can be simplified.
 In Section~\ref{sums of simultaneous games: scoring} we consider all three sums but now under the scoring convention. 
 The continued conjunctive sum has an easy test for equality and inequality based on the expected value, Theorem~\ref{thm: expected value scoring}. Lastly, in Section~\ref{sec: case studies}, we investigate three case studies \textsc{simultaneous clobber}, \textsc{simultaneous hackenbush}, and \textsc{subtraction squares} to demonstrate the previously defined concepts. \\

Work in this area began in 2007 under the description of \emph{synchronized games}\footnote{Cincotti et al. used synchronized to describe games where the moves
were of a particular type. Our scope is more general so we use the term \textit{simultaneous}.}. Cincotti and Iida~\cite{Cincotti2007} studied \textsc{cutcake} under simultaneous moves using disjunctive sum but with different outcome classes than we consider. In 2008, Cincotti and Iida~\cite{Cincotti2008a} studied \textsc{synchronized domineering} and solved the outcome classes for 
several board sizes. These proof ideas have been extended for work on \textsc{synchronized triomineering}~\cite{Cincotti2008b,Cao2012}, 
\textsc{synchronized tridomineering}~\cites{Cincotti2008b}, \textsc{synchronized quadromineering}~\cites{Cincotti2010, Cincotti2012 }.
 Bahri and Kruskal~\cite{Bahri2010} presented a new method for considering \textsc{synchronized domineering} which bounds 
 the outcomes using combinatorial game theory techniques. However, no framework for general rulesets or game values is developed. 
%==============================================================================

%==============================================================================
 %-----------------------------------------------------------------------------------------------
 \section{Basic Concepts}\label{basic concepts}
Before looking at the individual sums, we introduce common concepts and a simple game useful for examples.

\begin{definition} (Ruleset) Given a set of game positions $\Omega$, a \textit{ruleset} over $\Omega$ consists of
three functions $L,R,S:\Omega\rightarrow 2^{\Omega}$. For $G\in \Omega$, $L(G)$ is the set of \textit{Left options},
which we will denote as $\LG$, 
 $R(G)$ is 
the set of \textit{Right options}, denoted $\GR$,  and $S(G)$ is the set of \textit{simultaneous options}, denoted $\LGR$.
Moreover, for each $H\in S(G)$ there exists $i,j$ such that $H$ is associated with $^{L_i}G$ and $G^{R_j}$. A game $G$ is called a \textit{terminal position} if $^{\mathcal{L}}G^{\mathcal{R}} = \emptyset$. 
\end{definition}

\begin{definition}
The game $G$ can be represented as a \emph{matrix}, $M(G)$. The elements of $\LG$ label the rows
 (pure strategies for Left), and elements of $\GR$ label the columns (pure strategies for Right). 
 An entry $^{L_{i}}G^{R_{j}}$ will be the result of Left playing pure strategy $i$ and Right playing pure strategy $j$ in
  $G$.\\
  
   Hence if Left has $m$ pure strategies and Right has $n$ pure strategies we have,
   
\begin{equation*}
M(G) =
\begin{blockarray}{ccccc}
&$$G^{R_{1}}$$&$$G^{R_{2}}$$&$$\,\,\ \ldots$ $&$$G^{R_{n}} $$ \\
\begin{block}{c[cccc]}
$$ ^{L_{1}}G$$&$ $  ^{L_{1}}G^{R_{1}}$$ &$$ ^{L_{1}}G^{R_{2}}$$&$ $\ldots$$ &$$^{L_{1}}G^{R_{n}}$$\\
$$ ^{L_{2}}G$$ &$$  ^{L_{2}}G^{R_{1}}$$&$$ ^{L_{2}}G^{R_{2}}$$&$ $\ldots$$&$$^{L_{2}}G^{R_{n}}$$\\
   $$\vdots$$&$$\vdots$$&$$\vdots$$&&$$\vdots$$&\\
$$^{L_{m}}G$$&$ $ ^{L_{m}}G^{R_{1}}$$&$$ ^{L_{m}}G^{R_{2}}$$&$$\ldots$$&$$^{L_{m}}G^{R_{n}}$$\\
\end{block}
\end{blockarray}.
 \end{equation*}
 \end{definition}
In many games, a position obtained by a Left move followed by a Right move can be 
reached by interchanging the moves.
In these cases the moves can be played simultaneously without further clarification. 

The ruleset must include a mechanism for determining the effect of the simultaneous move, since there is no general procedure for determining $\,^{L_{i}}G^{R_{j}}$ from $^{L_{i}}G$ and $G^{R_{j}}$. We require that the simultaneous moves retain \textit{perfect information} and \textit{finite descent}, or at least the expected number of moves is finite.

%------------------------------------------------------------------------------------------------------------
\subsection{Evaluations and Outcomes}\label{ss:eando}
\begin{definition}
 For the \textit{extended normal play winning convention}, the outcomes of a terminal position are defined as follows: 
 \[o_{S}(G)=\begin{cases}
 \mathcal{L}, \text{ if } \LG\ne\emptyset,\, \LGR=\emptyset= \GR, \\
 \mathcal{R}, \text{ if } \LG=\emptyset=\LGR, \, \GR\ne\emptyset,\\
  \mathcal{D}, \text{Draw otherwise.}\\
 \end{cases}
 \]
 \end{definition}
In other words, Left wins if she has moves remaining in $G$ and Right does not; Right wins if he has moves and Left does
not; and it is a Draw if neither player has moves. 
 
\bigskip
\noindent
Ruleset for \textsc{subtraction squares}, \SQ($S_{L}$, $S_{R}$) on a strip of squares of length $n$,
denoted \SQ$(S_{L}$, $S_{R})(\underline{n})$ . 
\begin{itemize}
\renewcommand{\labelitemi}{{\bf $\circ$}}
\item Board: : Let $S_L$ and $S_R$ be sets of positive integers. The board is a strip of $n$ squares
denoted $\underline{n}$.\\
\item Moves: For any $p\in S_L$, $p\leq n$, Left can remove $p$ squares from the left or right side of the strip. 
Similarly, if $q\in S_R$ then Right can remove $q$ squares from the left or right side.  \\
\item Simultaneous rule:  If they both take from the same side then $\max\{p,q\}$
squares are removed. If they take from opposite sides then the move is to $n-p-q$ 
except if $\max\{p,q\}\leq n\leq p+q$ then the move is to 0.\\
\end{itemize}

Note that if the simultaneous rule is always to remove $p+q$, without any reference to the side played, then in \SQ($\{1, 10\},\{2, 10\}(\underline{12})$ neither player knows if subtracting $10$ is legal. On the other hand, if the simultaneous rule is to remove $| p-q|$ then the same game could last forever. 

In \SQ($\{1\},\{2\}$), $\underline{0}$ is a Draw and $\underline{1}$ is a Left win, since Left has a move and Right doesn't. In $\underline{2}$, regardless of whether they play on the left (l) or right (r) the result is $\underline{0}$ which is a Draw. 

A useful game for further examples is \SQ$'(\{1\},\{2\})$, which is \SQ($\{1\},\{2\}$), except  Left is not allowed to move in $\underline{2}$. 
As in $SQ(\{1\},\{2\})$, $\underline{0}$ is a draw and $\underline{1}$ is a Left win but now $\underline{2}$ is a Right win. \\

Games which are won by scores are considered in Section \ref{sums of simultaneous games: scoring}.
Now, scores can be assigned to a terminal position in many possible ways. We follow a CGT approach.

Let $G$ be a CGT game where at least one player does not have a move.
Denote by $v_{A}(G)$ the CGT value of $G$.
 The value, $v_{A}(G)$, equals
the maximum number of moves one player can make before opening new moves for their opponent. Thus
$v_A(G)$ is an integer, non-negative if Right cannot move and non-positive if Left cannot move.
 For example, in Figure~\ref{example: disjunctive sum}, $H_{2}$ has a score of $2$. 
When $G$ is re-interpreted as a simultaneous game, under a particular sum, then $G$ is terminal
and each terminal component will have a CGT value associated with it. The 
score of $G$ will depend on the termination rules for that sum. 

For simultaneous play,  the usual interpretation would have that Left wins if $v_A(G)>0$, Right wins if $v_A(G)<0$
and a draw otherwise. \\

In Section~\ref{sums of simultaneous games: normal}, we show that the disjunctive sum requires the players to know 
the expected value of the game
under any winning condition we consider. Moreover, in Example \ref{ex:dsnp}, we show that the expected value
cannot form the basis of an evaluation function in general.

 In the sums that we consider, it may be possible for the two players to play in different components. Thus in a sum $G\odot H$, the definition of
$S(G\odot H)$ may require all of $\LG$, $\GR$, $\LGR$, $\LH$, $\HR$, and $\LHR$.

 We also consider both the conjunctive and continued conjunctive 
sums. In Section~\ref{sec: case studies}, we demonstrate, via case studies,
 how different sums and winning conditions affect the outcomes of game play.

A  natural tool to consider when presented with a matrix of games  is to assign values to terminal games and 
calculate the expected value of the game. Here, to represent the outcome
 in terms of expected values, we define two related measures. 
 \begin{definition} Let $G$ be a game. The \textit{expected value}, $Ex(G)$, is given recursively
 \begin{eqnarray*}
 Ex(G) &=& \begin{cases} 1, \text{ if $G$ is terminal and a Left win,}\\
                                      0, \text{ if $G$ is terminal and a Draw,}\\
                                      -1, \text{ if $G$ is terminal and a Right win,}\\
                                       Ex(M'(G)), \text{where $M'(G)$ is $M(G)$ in which each ${}^{L_i}G^{R_j}$ is}\\
                                       \qquad \qquad \qquad\text{replaced by $Ex({}^{L_i}G^{R_j})$}.
                    \end{cases}
 \end{eqnarray*}
 
 \end{definition}

We assume that a player wants to maximize their expectation of winning. Therefore, in $M'(G)$ we have the standard concepts of domination \cite{Barron2008} and thus dominated strategies can be eliminated thereby reducing the matrix. Eliminating those dominated options also translates back to $G$ where the corresponding options can also be eliminated. This is called the \textit{reduced} game, $Re(G)$.\\

In \SQ($\{1\},\{2\})(\underline{3})$, the options are given in Figure \ref{fig: 1/2 games}, outcomes in  Figure~\ref{fig: 1/2 outcomes}, and expected values of the options in Figure~\ref{fig:1/2}. From Figure~\ref{fig:1/2} we determine that the expected value of the overall game is $Ex(\underline{3})=1/2$.

 In $k$  plays of $\underline{3}$, since Right can never win, 
Left will expect
to win half of the games and the other half will be Draws.
  
  \begin{figure}[h]
  \begin{center}
 \begin{minipage}[b]{0.25\linewidth}
  \[
\begin{blockarray}{ccc}
&l & r  \\
\begin{block}{c[cc]}
 l & \underline{1} & \underline{0} \\
  r & \underline{0} & \underline{1} \\
\end{block}
\end{blockarray}
 \]
   \caption{Games.}\label{fig: 1/2 games}
 \end{minipage}
 \begin{minipage}[b]{0.25\linewidth}
  \[
\begin{blockarray}{ccc}
&l &r \\
\begin{block}{c[cc]}
 l & $L$ & $D$ \\
  r& $D$  & $L$ \\
\end{block}
\end{blockarray}
 \]
   \caption{Outcomes.}\label{fig: 1/2 outcomes}
 \end{minipage}
 \begin{minipage}[b]{0.3\linewidth}
  \[
\begin{blockarray}{ccc}
&l &r  \\
\begin{block}{c[cc]}
l& 1 & 0 \\
r& 0 & 1 \\
\end{block}
\end{blockarray}
 \]
  \caption{Expected Values.}\label{fig:1/2}
 \end{minipage}
\end{center}
\end{figure}

 The challenge with this
  approach is, if the game is a sum of two other games, say $G$ is the sum of $H$ and $K$, what reductions can be first applied to $H$ and $K$ individually before considering their sum. This is a similar problem previously considered in CGT. Even though games are played under alternating play, one cannot restrict the study of components 
 and insist that play alternates in each component.
 In the disjunctive sum of games $A$ and $B$, Left could prefer to play in $A$ and Right 
  could prefer to play in $B$. Hence, Left could have two or more consecutive moves in $A$ and Right two moves in $B$.
   
\section{Sums of Simultaneous Games: Extended Normal Play}\label{sums of simultaneous games: normal}

The notions of equality and greater than are defined similarly for the outcome classes of any generic sum, $\odot$.
\begin{definition}\label{defn:equality} Let $G$ and $H$ be games,
\begin{itemize}
\renewcommand{\labelitemi}{{\bf $\circ$}}
\item \textit{Equality of games:} $G = H$ if $(\forall X)$  $\alpha(G\odot X)=\alpha(H\odot X)$.
\item \textit{Greater than:} $G \geq H$ if $(\forall X)$,  $\alpha(G\odot X)\geq \alpha(H\odot X)$.
\end{itemize}
where $\alpha$ represents a generic measure of winning.
\end{definition}

Care must be taken when reducing weakly dominated strategies in $G$ and $H$, since $o_{S}(G\odot H)$ is not necessarily equal to $o_{S}(G'\odot H')$.\\

All the sums have two common properties.

\begin{theorem}
Simultaneous combinatorial games, under a sum, form an equivalence relation and the quotient is a
partial order.
\end{theorem}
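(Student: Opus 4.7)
The plan is to recognize that the statement is a purely formal consequence of the fact that the measure $\alpha$ takes values in a (partially, indeed totally) ordered set: for extended normal play, the codomain is the three-element totally ordered set $\{\mathcal{L},\mathcal{D},\mathcal{R}\}$, and for scoring play it is $\mathbb{R}$ (or the expected value, again totally ordered). Once that is noted, both statements follow by pulling back the order through the universal quantifier over test games $X$, so the proof will reduce to a short diagram-chase with the definitions.

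First, I would verify the three axioms for $=$. Reflexivity is immediate: $\alpha(G\odot X)=\alpha(G\odot X)$ for every $X$. Symmetry of $=$ on the codomain passes through the universal quantifier. Transitivity uses the same transitivity of equality on the codomain: if $\alpha(G\odot X)=\alpha(H\odot X)$ and $\alpha(H\odot X)=\alpha(K\odot X)$ for every $X$, then $\alpha(G\odot X)=\alpha(K\odot X)$ for every $X$. Each step is one line; the only thing being used about the sum $\odot$ is that it is a binary operation on games, so the argument is uniform across the three sums considered in the paper.

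Next, I would show $\geq$ descends to the quotient and defines a partial order there. Well-definedness amounts to showing that if $G=G'$ and $H=H'$ then $G\geq H$ iff $G'\geq H'$; this is immediate from substituting the equalities $\alpha(G\odot X)=\alpha(G'\odot X)$ and $\alpha(H\odot X)=\alpha(H'\odot X)$ into the inequality. Reflexivity of $\geq$ mirrors that for $=$. Transitivity is again an application of transitivity of $\geq$ on the codomain through the universal quantifier. For antisymmetry, if $G\geq H$ and $H\geq G$, then for every $X$ we have both $\alpha(G\odot X)\geq\alpha(H\odot X)$ and $\alpha(H\odot X)\geq\alpha(G\odot X)$; since the codomain is (at least) a poset, this forces $\alpha(G\odot X)=\alpha(H\odot X)$ for every $X$, which is exactly $G=H$, so $G$ and $H$ represent the same equivalence class.

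There is no real obstacle here; the only subtlety worth flagging in the write-up is that the proof relies solely on the codomain of $\alpha$ being a poset and on $\odot$ being a binary operation, so a single paragraph suffices and it applies uniformly to the disjunctive, conjunctive, and continued conjunctive sums under either the extended normal play outcome or the scoring/expected-value measure. The nontrivial content of the paper (domination, reductions, calculating expected values in sums) does \emph{not} enter this theorem; those issues appear only when one tries to compute $\alpha(G\odot X)$ from data about $G$ and $X$ separately, which is a later question.
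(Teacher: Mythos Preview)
Your proposal is correct and follows essentially the same approach as the paper: both recognize that the equivalence relation and partial order axioms are purely formal consequences of Definition~\ref{defn:equality}, inherited from equality and order on the codomain of $\alpha$ through the universal quantifier over $X$. Your write-up is considerably more explicit than the paper's (which simply asserts ``it is clear'' and ``the proof for a partial order is now similar''), and you additionally check well-definedness of $\geq$ on the quotient, which the paper leaves implicit.
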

\begin{proof}
From the definition of equality, it is clear that: (i) $G=G$ for all $G$; (ii) if $G=H$ then $H=G$; and (iii) if $G=H$ and $H=K$ then $G=K$.
Therefore equality is an equivalence relation.

Equal games are identified to obtain the quotient by `$=$', that is, the objects are now the equivalence classes.  The proof for a partial order is now similar
to that for equality.\qed
\end{proof}

An open question is what properties, if any, does the partial order have. In alternating play CGT, the order is a distributive lattice. Here we only know about 
the continued conjunctive sum with the scoring winning convention, see Corollary \ref{cor:ccspo}.
We consider a narrower definition of equality in Section \ref{sec:ss} that has been used in alternating play CGT with good results, introduced in \cite{Plamb2005} but see \cite{PlambS2008} for a good introduction.

Table~\ref{tbl: subtraction squares} illustrates the differences between the sums defined in the next three sections. For examples pertaining to \textsc{simultaneous hackenbush}, see Section~\ref{case study hackenbush}.

\begin{table}[h]
\begin{center}
\begin{tabular}{|c||c|c|c|}\hline
&\underline{1}&\underline{2}&\underline{3} \\
\hline\hline
$\,^{\mathcal{L}}G$&$\{\,\underline{0}\,\}$&$\{\, \underline{1}\, \}$&$\{\, \underline{2}\, \}$\\
$\,G^{\mathcal{R}}$&$\emptyset$&$\{\, \underline{0}\, \}$&$\{\, \underline{1}\, \}$\\
$\,^{\mathcal{L}}G^{\mathcal{R}}$&$\emptyset$&$\{\, \underline{0}\, \}$&$\{\, \underline{0}, \, \underline{1}\, \}$\\\hline
\end{tabular}
\caption{Summary of options for positions of \SQ($\{1\}, \{2\}$).}\label{tbl: subtraction squares}
\end{center}
\end{table}
 \subsection{Disjunctive Sum}\label{disjunctive sum}

Players are moving at the same time, and 
   thus all play combinations across components must be considered when analyzing any particular game under 
   disjunctive sum. By the above value assignments, the expected value will enable us to determine a local 
   expectation for a player to win a particular game. Left prefers positive expected value, and Right prefers
    negative expected value. However, if the expected value is zero, it does not imply a Draw. Similarly, if an 
    expected value is positive it does not guarantee that Left will win. 
   
\begin{definition}\label{normal disjunctive}
The \emph{disjunctive sum} of two combinatorial games being played under simultaneous moves, means that each player chooses a component and plays a legal move in that component. 
 Formally, the set of options from $G+H$ are as follows: 
\[G+H =  
\begin{cases}
\{\,^{L}G + H^{R}, \, ^{L}G^{R}+H, \,G^{R} + \,^{L}H, \, G + \,^{L}H^{R}\},\\
\quad\quad~\text{if $^{\mathcal{L}}G$ or $^{\mathcal{L}}H$ and $G^{\mathcal{R}}$ or $H^{\mathcal{R}}$ are non-empty};\\
\emptyset,\quad~\text{otherwise}.
\end{cases}
\]
Using Table~\ref{tbl: subtraction squares}, consider $G = \underline{1} + \underline{2} + \underline{3}$. The game is not terminal because Left has moves in $\underline{1}$, $\underline{2}$, and $\underline{3}$, while Right has moves in $\underline{2}$ and $\underline{3}$. \\

\noindent
The \textit{extended normal winning convention} is: if $G+H = \emptyset$, then
\begin{itemize}
\renewcommand{\labelitemi}{{\bf $\circ$}}
\item  Left wins if she has a move remaining, in $G$, $H$, or both, but Right does not. 
\item Right wins if he has a move remaining, in $G$, $H$, or both,  but Left does not. 
\item Otherwise the game is a Draw. 
\end{itemize}
\end{definition}

In normal play, CGT, $v(G+H)=v(G)+v(H)$. For simultaneous play, the hope would be that 
$Ex(G+H) = Ex(G)+Ex(H)$ or something equally simple for $Ex(G+H)$. 
However, the presence of $\LG + \HR$ and 
$\GR+\LH$ in the options of $G+H$ makes this unlikely for all but a few games
as the next example shows. 

Note, we will use $Ex(\underline{n})$ as shorthand for $Ex(\SQ(\{1\}, \{2\})(\underline{n}))$.

\begin{example}\label{ex:dsnp}
   In \SQ($\{1\}, \{2\}$), the position $\underline{2}$ is 
   a Draw and 
   $Ex(\underline{2}) = 0$    but now consider $\underline{2}+\underline{2}$. Right can never win so
   $Ex(\underline{2}+\underline{2})\geq 0$. In the analysis, $l\,\underline{2}$ and $r\,\underline{2}$ denotes playing in the left or right component
   in $\underline{2}+\underline{2}$. Playing in the same component always results in $\underline{0}$, regardless, so
   we can abbreviate $M(\underline{2}+\underline{2})$. Thus, $Ex(\underline{2}+\underline{2}) = 1/2$ (see Figure~\ref{expected values ss}) which is not $Ex(\underline{2})+Ex(\underline{2})$ or $Ex(\underline{2})$$Ex(\underline{2})$.

  \begin{figure}[h]
  \begin{center}
 \begin{minipage}[b]{0.28\linewidth}
      \[
\begin{blockarray}{ccc}
&l\,\underline{2}& r\,\underline{2} \\
\begin{block}{c[cc]}
 l\, \underline{2}& \underline{2} & \underline{1}\\
  r\, \underline{2} & \underline{1}& \underline{2}\\
\end{block}
\end{blockarray}
 \]
 \caption{Games.}
  \end{minipage}
    \begin{minipage}[b]{0.28\linewidth}
      \[
\begin{blockarray}{ccc}
&l\,\underline{2} & r\,\underline{2} \\
\begin{block}{c[cc]}
 l\,\underline{2}& D & L \\
  r\,\underline{2}& L & D \\
\end{block}
\end{blockarray}
 \]
 \caption{Outcomes.}
  \end{minipage}
    \begin{minipage}[b]{0.3\linewidth}
      \[
\begin{blockarray}{ccc}
&l\,\underline{2} & r\,\underline{2} \\
\begin{block}{c[cc]}
 l\,\underline{2}& 0 & 1 \\
  r\, \underline{2} & 1 & 0 \\
\end{block}
\end{blockarray}
 \]
 \caption{Expected Values.}\label{expected values ss}
  \end{minipage}
  \end{center}
\end{figure}

     \end{example}  
    
Example \ref{ex:dsnp} shows that the obvious test for $G=H$ in simultaneous, extended normal play cannot be just $Ex(G) = Ex(H)$
even though this is a necessary condition. 

\begin{question} 
Is there a set of conditions which only involve followers of $G$ and $H$ to
prove that $G\geq H$ in simultaneous, extended normal play with disjunctive sum? 
\end{question}

%------------------------------------------------------------------------------------------------------------------------
\subsection{Conjunctive Sum}\label{conjunctive sum}

\begin{definition}\label{normal conjunctive}
The \emph{conjunctive sum} of two simultaneous combinatorial games, written $G\wedge H$,
means that each player plays a legal move in all components.  Formally, the set of options from 
$G\wedge H$ are as follows: 
\[G\wedge H=  \{\,^{L}(G\wedge H)^{R}\} =
\begin{cases}
\{^{L}G^{R}\wedge \,^{L}H^{R}\}, ~\text{if $\LGR$~\emph{and}~$\LHR$~ are non-empty}; \\
\emptyset, ~\text{otherwise}.\\
\end{cases}\]

If $K_{1}\wedge K_{2}\wedge\ldots \wedge K_{n}= \emptyset$, then this means that at least one of the components is terminal. The extended normal play winning convention becomes:
\begin{itemize}
\renewcommand{\labelitemi}{{\bf $\circ$}}
\item  Left wins if she has an option in every terminal component, but Right does not. 
\item Right wins if he has an option in every terminal component, but Left does not. 
\item Otherwise the game is a Draw. 
\end{itemize}
\end{definition}
Using Table~\ref{tbl: subtraction squares}, consider now $G = \underline{1} \wedge \underline{2} \wedge \underline{3}$. The game is over because Right does not have a move in $\underline{1}$. \\

The game $a\wedge b$ finishes when one component finishes. This brings in a timing issue and one cannot expect $Ex(a\wedge b)$ to be a simple combination of $Ex(a)$ and $Ex(b)$. 

As a direct example consider \SQ$'(\{1\},\{2\}$), in the game $\underline{5} \wedge \underline{6}$ (see Figure~\ref{example: 5 and 6}). If Right's move does not overlap with Left's in $\underline{5}$, this guarantees that Right will win the game $\underline{5} \wedge \underline{6}$. However, Right cannot control this. After one turn, the resulting position will be one of the following: 
\begin{itemize}
\renewcommand{\labelitemi}{{\bf $\circ$}}
\item $\underline{2} \wedge \underline{3}$ or $\underline{2} \wedge \underline{4}$, and the game is over and Right wins;
\item $\underline{3} \wedge \underline{3}$, $Ex(\underline{3} \wedge \underline{3}) = 1/4$;
\item $\underline{3} \wedge \underline{4}$, $Ex(\underline{3} \wedge \underline{4}) = 1/4$.
\end{itemize}

 Note that $Ex(\underline{5}) = -0.25$, $Ex(\underline{6}) = 0.25$ and $Ex(\underline{5}\wedge \underline{6}) = -0.25$ and thus, in general, $Ex(a\wedge b) \neq Ex(a)Ex(b)$ and $Ex(a\wedge b) \neq Ex(a) + Ex(b)$.
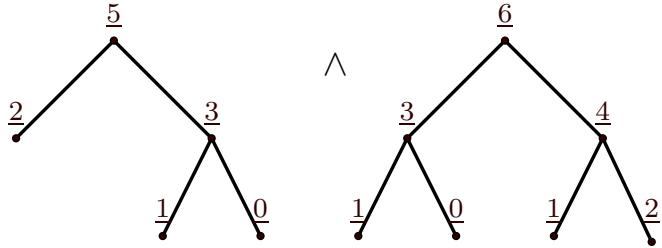
\begin{figure}[H]
\begin{center}
\definecolor{ttqqqq}{rgb}{0.2,0.,0.}
\scalebox{1.3}{
\begin{tikzpicture}[line cap=round,line join=round,>=triangle 45,x=1.0cm,y=1.0cm]
\draw [line width=1.pt] (4.,2.)-- (3.,1.);
\draw [line width=1.pt] (4.,2.)-- (5.,1.);
\draw [line width=1.pt] (5.,1.)-- (4.5,0.);
\draw [line width=1.pt] (5.,1.)-- (5.5,0.);
\draw [line width=1.pt] (8.,2.)-- (7.,1.);
\draw [line width=1.pt] (8.,2.)-- (9.,1.);
\draw [line width=1.pt] (7.,1.)-- (6.5,0.);
\draw [line width=1.pt] (7.,1.)-- (7.5,0.);
\draw [line width=1.pt] (9.,1.)-- (8.5,0.);
\draw [line width=1.pt] (9.,1.)-- (9.5,-0.06);
\draw (6,2) node[anchor=north west] {$\wedge$};
\begin{scriptsize}
\draw [fill=ttqqqq] (4.5,0.) circle (1pt);
\draw[color=ttqqqq] (4.5,0.25) node {$\underline{1}$};
\draw [fill=ttqqqq] (5.5,0.) circle (1pt);
\draw[color=ttqqqq] (5.5,0.25) node {$\underline{0}$};
\draw [fill=ttqqqq] (3.,1.) circle (1pt);
\draw[color=ttqqqq] (3,1.25) node {$\underline{2}$};
\draw [fill=ttqqqq] (5.,1.) circle (1pt);
\draw[color=ttqqqq] (5,1.25) node {$\underline{3}$};
\draw [fill=ttqqqq] (4.,2.) circle (1pt);
\draw[color=ttqqqq] (4.,2.25) node {$\underline{5}$};
\draw [fill=ttqqqq] (6.5,0.) circle (1pt);
\draw[color=ttqqqq] (6.5,0.25) node {$\underline{1}$};
\draw [fill=ttqqqq] (7.5,0.) circle (1pt);
\draw[color=ttqqqq] (7.5,0.25) node {$\underline{0}$};
\draw [fill=ttqqqq] (7.,1.) circle (1pt);
\draw[color=ttqqqq] (7,1.25) node {$\underline{3}$};
\draw [fill=ttqqqq] (8.5,0.) circle (1pt);
\draw[color=ttqqqq] (8.5,0.25) node {$\underline{1}$};
\draw [fill=ttqqqq] (9.5,-0.06) circle (1pt);
\draw[color=ttqqqq] (9.5,0.25) node {$\underline{2}$};
\draw [fill=ttqqqq] (9.,1.) circle (1pt);
\draw[color=ttqqqq] (9.,1.25) node {$\underline{4}$};
\draw [fill=ttqqqq] (8.,2.) circle (1pt);
\draw[color=ttqqqq] (8,2.25) node {$\underline{6}$};
\end{scriptsize}
\end{tikzpicture}}
\caption{\textsc{subtraction squares}: $\underline{5} \wedge \underline{6}$.}\label{example: 5 and 6}
\end{center}
\end{figure}

\subsection{Continued Conjunctive Sum}\label{continued conjunctive sum}

\begin{definition}\label{normal continued conjunctive}
The \emph{continued conjunctive sum} of two simultaneous combinatorial games, written $G\bigtriangledown H$,
means that each player plays a legal move in each component where they both have a move.  
Formally, the set of options from $G\bigtriangledown H$ are as follows: 
\[G\bigtriangledown H=  \{^{L}(G\bigtriangledown H)^{R}\} =
\begin{cases}
\{^{L}G^{R}\, \bigtriangledown\, ^{L}H^{R}\}, ~\text{if $\LGR$~\emph{or}~$\LHR$~ are non-empty}; \\
\emptyset, ~\text{otherwise}.\\
\end{cases}\]

Using Table~\ref{tbl: subtraction squares}, consider $G = \underline{1} \bigtriangledown \underline{2} \bigtriangledown \underline{3}$. The game is not over because both players have moves in $\underline{2}$ and $\underline{3}$.\\

If $G\bigtriangledown H = \emptyset$, then
\begin{itemize}
\renewcommand{\labelitemi}{{\bf $\circ$}}
\item  Left wins if she has an option in every component, but Right does not. 
\item Right wins if he has an option in every component, but Left does not. 
\item Otherwise the game is a Draw. 
\end{itemize}
\end{definition}

\begin{theorem}
Let $G = G_{1}\bigtriangledown G_{2}\bigtriangledown\ldots \bigtriangledown G_{n}$. If any of the components, $G_{i}$, is a Draw, then $G$ is also a Draw. 
\end{theorem}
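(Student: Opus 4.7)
My plan is to exploit the fact that in the continued conjunctive sum the components are effectively decoupled: a move in $G_i$ only draws on $\LG_i$, $\GR_i$, and $\LGR_i$, and component $G_i$ freezes precisely when $\LGR_i=\emptyset$. Consequently, any strategy the players use in standalone $G_i$ can be lifted verbatim to the sum, and the terminal state that $G_i$ reaches inside the sum coincides with one that is reachable in standalone play of $G_i$. This is the one substantive fact I will need about $\bigtriangledown$ itself.

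First, I would reformulate the winning conditions for the sum. The sum $G$ terminates once every $G_j$ has frozen, at which point the extended normal play rules give $o_S(G)=\mathcal{L}$ iff every $\LG_j$ is non-empty while some $\GR_k$ is empty. Since the ruleset yields $\LGR_j\neq\emptyset$ whenever both $\LG_j$ and $\GR_j$ are non-empty, each frozen component must satisfy $\LG_j=\emptyset$ or $\GR_j=\emptyset$; combined with the $\mathcal{L}$-win requirement ``Left has options in every component'', this forces $\GR_j=\emptyset$ for every $j$, so every frozen $G_j$ is itself an $\mathcal{L}$-win terminal position. Symmetrically, $o_S(G)=\mathcal{R}$ forces every $G_j$ to freeze as $\mathcal{R}$-win. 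So to show $o_S(G)=\mathcal{D}$ it suffices to show that in the sum neither ``all components end as the same kind of win'' situation can be forced.

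Second, I would invoke the hypothesis $o_S(G_i)=\mathcal{D}$. By definition this supplies Right with a strategy in standalone $G_i$ that prevents $G_i$ from ending as an $\mathcal{L}$-win terminal position, and symmetrically gives Left a strategy preventing an $\mathcal{R}$-win. Lifting Right's strategy to the sum (while letting him play any legal moves in the remaining components), the decoupling observation shows that the play inside $G_i$ is identical to a standalone play, so $G_i$ freezes at a non-$\mathcal{L}$-win terminal position. By the reformulation above, $G$ cannot be an $\mathcal{L}$-win. Lifting Left's strategy yields the symmetric conclusion that $G$ cannot be an $\mathcal{R}$-win. The two conclusions together give $o_S(G)=\mathcal{D}$.

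The main obstacle I anticipate is the bookkeeping around components that freeze at different times: a standalone ``drawing strategy'' in $G_i$ must still be well-defined after the rest of the sum has either outpaced or lagged behind $G_i$. Because freezing in $\bigtriangledown$ is a purely local event ($\LGR_i=\emptyset$), this worry is routine to discharge, but the argument does lean on the convention that $\LGR=\emptyset$ occurs only when one of $\LG,\GR$ is empty; if the ruleset ever permits $\LGR=\emptyset$ with both $\LG$ and $\GR$ non-empty, the one-to-one correspondence between ``frozen as $\mathcal{L}$-win'' and ``terminal $\mathcal{L}$-win'' breaks and a supplementary argument in that degenerate case would be required.
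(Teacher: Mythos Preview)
Your proof is correct and follows the same underlying idea as the paper, namely that play in each component of a continued conjunctive sum is independent of the others, so a drawing strategy in $G_i$ lifts directly to the sum and blocks either player from achieving the ``options in every component'' condition. The paper's own proof is a single sentence: ``This result follows immediately from the definition of continued conjunctive sum.'' You have simply unpacked what ``immediately'' means, by (i) reformulating the $\mathcal{L}$-win condition for the sum as ``every component freezes as an $\mathcal{L}$-win terminal'', (ii) observing that decoupling lets Right's standalone drawing strategy in $G_i$ be used verbatim in the sum, and (iii) concluding that $G_i$ cannot freeze as an $\mathcal{L}$-win, hence the sum is not an $\mathcal{L}$-win (and symmetrically for $\mathcal{R}$).

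One point worth noting: you are actually more careful than the paper. Your caveat about the degenerate case $\LGR=\emptyset$ with both $\LG\neq\emptyset$ and $\GR\neq\emptyset$ is genuine. If such a terminal position is allowed, then a component $G_i$ that freezes in this state is a Draw, yet Left still has an option there; so the reformulation ``Left wins the sum $\Rightarrow$ every component freezes as an $\mathcal{L}$-win terminal'' fails, and in fact the theorem itself can fail (take $G_1$ a degenerate Draw terminal and $G_2$ an $\mathcal{L}$-win terminal: the sum is an $\mathcal{L}$-win). The paper's one-line proof silently assumes this degeneracy does not occur, whereas you flag it explicitly. So your argument is the same route, just with the hidden hypothesis surfaced.
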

\begin{proof}
This result follows immediately from the definition of continued conjunctive sum. 
\end{proof}

The expected values of $G$ and $H$ are not enough to determine the expected value of $G\bigtriangledown H$. 
 For example, when playing \SQ$'(\{1,4\},\{2\})(\underline{4})$ we obtain the results in Figures~\ref{ex: continued1} and~\ref{ex: continued2}. Note that $x_{l}$ means that the player removed $x$ from the left side of $\underline{4}$. 

  \begin{figure}[H]
  \begin{center}
 \begin{minipage}[b]{0.28\linewidth}
 \[
\begin{blockarray}{ccc}
&$$2_{l}$$&$$2_{r}$$ \\
\begin{block}{c[cc]}
$$ 1_{l}$$&$ $ \underline{2}$$ &$$\underline{1}$$\\
$$ 1{r}$$&$ $ \underline{1}$$ &$$\underline{2}$$\\
$$ 4_{l}$$&$ $ \underline{0}$$ &$$\underline{0}$$\\
$$ 4_{r}$$&$ $ \underline{0}$$ &$$\underline{0}$$\\
\end{block}
\end{blockarray}
\]
\caption{Options for \SQ$'(\{1,4\},\{2\})(\underline{4})$.}\label{ex: continued1}
 \end{minipage}
 \begin{minipage}[b]{0.3\linewidth}
 \[
\begin{blockarray}{ccc}
&$$2_{l}$$&$$2_{r}$$ \\
\begin{block}{c[cc]}
$$ 1_{l}$$&$ $ -1$$ &$$1$$\\
$$ 1_{r}$$&$ $ 1$$ &$$-1$$\\
$$ 4_{l}$$&$ $0$$ &$$0$$\\
$$ 4_{r}$$&$ $ 0$$ &$$0$$\\
\end{block}
\end{blockarray}\]
\caption{Expected Values.}\label{ex: continued2}
 \end{minipage}
 \end{center}
 \end{figure} 
 \noindent The expected value of \SQ$'(\{1,4\},\{2\})(\underline{4})$ is $0$.
 Now consider \SQ$'(\{1,4\},\{2\})(\underline{3})$. We obtain,

   \begin{figure}[H]
  \begin{center}
 \begin{minipage}[b]{0.28\linewidth}
 \[\begin{blockarray}{ccc}
&$$2_{l}$$&$$2_{r}$$ \\
\begin{block}{c[cc]}
$$ 1_{l}$$&$ $ \underline{1}$$ &$$\underline{0}$$\\
$$ 1{r}$$&$ $ \underline{0}$$ &$$\underline{1}$$\\
\end{block}
\end{blockarray}\]
\caption{Options for \SQ$(\{1,4\},\{2\})(\underline{3})$.}
 \end{minipage}
 \begin{minipage}[b]{0.3\linewidth}
\[\begin{blockarray}{ccc}
&$$2_{l}$$&$$2_{r}$$ \\
\begin{block}{c[cc]}
$$ 1_{l}$$&$ $ 1$$ &$$0$$\\
$$ 1{r}$$&$ $0$$ &$$1$$\\
\end{block}
\end{blockarray}\]
\caption{Expected Values.}
 \end{minipage}
 \end{center}
 \end{figure} 

 In $\underline{4}$, let $(p_1, p_2,p_3,p_4)$ be the probabilities of  playing
  $1_\ell,1_r,4_\ell,4_r$ respectively. 
Playing  $(0,0,\frac{1}{2}, \frac{1}{2})$ and $(\frac{1}{2}, \frac{1}{2},0,0)$ have expected values of 0,
but playing  $(0,0,\frac{1}{2}, \frac{1}{2})$ gives  $Ex(\underline{4}\bigtriangledown \underline{3}) = 0$,  whereas 
$(\frac{1}{2}, \frac{1}{2},0,0)$ gives $Ex(\underline{4}\bigtriangledown \underline{3}) = \frac{1}{4}$.

Even worse, dominated  strategies for a game $G$ may be the best for $G\bigtriangledown H$. For example, let $H$ be a terminal game in which
$\LH=\emptyset$ and $\HR\ne \emptyset$ and let $G$ have the expected
values
\[\begin{blockarray}{ccccc}
&$a$&$b$&$c$&$d$\\
\begin{block}{c[cccc]}
$x$&1&$-1$&$-1/2$&$1/4$\\
$y$&$-1$&$1$&$1/4$&$-1/2$\\
\end{block}
\end{blockarray}\]
where, further, Right has no chance of winning in the positive options, and Left has no chance of of winning in the negative
options.  Clearly Left plays $(x,y)= (1/2,1/2)$. The expected value of $G$ is $-1/4$ and is achieved when $(a,b,c,d) = (0,0,1/2,1/2)$. Right playing 
$(a,b,c,d) = (1/2,1/2,0,0)$ has expected value 0. However, in $G\bigtriangledown H$, Left
 can never win since Left always loses in $H$.
Therefore, Right achieves the expected value $-1/2$ with $(a,b,c,d) = (1/2,1/2,0,0)$.

This example shows that in evaluating $G\bigtriangledown H$, we cannot replace $G$ and $H$ by their reduced forms,
that is, $Ex(G\bigtriangledown H)\ne Ex(Re(G)\bigtriangledown Re(H))$.\\

In order to test for inequality, we need to know more about the probabilities of winning, drawing and losing rather than simply
 the expected values of $G$ and $H$. We expand on this in Section \ref{sec:ss} where we consider the games $SQ(\{a\},\{b\})$.

%---------------------------------------------------------------
\section{Sums of Simultaneous Games: Scoring Play}\label{sums of simultaneous games: scoring}

For the purposes of this section: Left wins if $score_{G}>0$, Right wins if $score_{G} <0$, and otherwise the game is a Draw. 

\subsection{Disjunctive sum for Scoring Play}

\begin{definition}\label{scoring disjunctive}
The \emph{disjunctive sum} of two simultaneous combinatorial games under 
 scoring play, means that each player chooses a component and plays a legal move in that component. 
 Formally, the set of options from $G+H$ are as follows: 
\[G+H =  
\begin{cases}
\{\,^{L}G + H^{R}, \, ^{L}G^{R}+H, \,G^{R} + \,^{L}H, \, G + \,^{L}H^{R}\},\\
\quad\quad~\text{if $^{\mathcal{L}}G$ or $^{\mathcal{L}}H$ and $G^{\mathcal{R}}$ or $H^{\mathcal{R}}$ are non-empty};\\
score_{G+H},\quad~\text{if $G+H$ is terminal}.
\end{cases}
\]
\end{definition}

 \begin{example}\label{dead-end}
 Consider the games $A= \{-5\mid\cdot\}$ and $B = \{\cdot\mid7\}$. Under simultaneous play, $A$ is a Left win, since she has an option but Right does not. Similarly, $B$ is a Right win since Right has an option but Left does not. So $A \geq B$.
  Now consider $A+ B$ under disjunctive sum. On the first turn, Left plays in $A$ to $-5$ and Right plays in $B$ to $7$. From here, Right will run out of moves before Left, and thus, the game $A + B$ is a Left win. Hence, individual components do not tell us what will happen in a disjunctive sum.  
 \end{example}
 
If $^{\mathcal{L}}G = \emptyset$ and $G^{\mathcal{R}} = \emptyset$, then $G = 0$ and the game is a Draw. However, the converse is not true. For example, let $G=$\SQ$(\{1\},\{2\})(\underline{2})$ then
 $Ex(G) = 0$ and the game is a Draw, but both Left and Right have moves. 

%-----------------------------------------------------------------------------------------------------------------------------
\subsection{Conjunctive sum for Scoring Play}

\begin{definition}\label{scoring conjunctive}
The \emph{conjunctive sum} of two simultaneous combinatorial games under scoring play, written $G \wedge H$, means that each player plays a legal move in all components. Once one component is a terminal position, the game ends and the score for that component is the score for the game.  Formally, the set of options from $G\wedge H$ are as follows: 
\[G\wedge H=  \{^{L}(G\wedge H)^{R}\} =
\begin{cases}
\{^{L}G^{R}\wedge \,^{L}H^{R}\}, ~\text{if $^{\mathcal{L}}G^{\mathcal{R}}$~\emph{and}~$^{\mathcal{L}}H^{\mathcal{R}}$~ are non-empty}; \\
\text{score}_{G}, ~\text{if $^{\mathcal{L}}G$ or $G^{\mathcal{R}} = \emptyset$~and~$^{\mathcal{L}}H^{\mathcal{R}}$~is non-empty};\\
\text{score}_{H},~\text{if $^{\mathcal{L}}H$ or $H^{\mathcal{R}} = \emptyset$~and~$^{\mathcal{L}}G^{\mathcal{R}}$~is non-empty};\\
\text{score}_{G}+\text{score}_{H}, ~\text{if $G \wedge H$ is terminal.}\\
\end{cases}\]
\end{definition}

\subsection{Continued Conjunctive sum for Scoring Play}
\begin{definition}\label{scoring continued conjunctive}
The \emph{continued conjunctive sum} of two simultaneous combinatorial games under scoring play, written $G \bigtriangledown H$, means that each player plays a legal move in all components where both players have moves. Formally, the set of options from $G\bigtriangledown H$ are as follows: 
\[G\bigtriangledown H=  \{^{L}(G\bigtriangledown H)^{R}\} =
\begin{cases}
\{^{L}G^{R}\bigtriangledown \,^{L}H^{R}\}, ~\text{if $\LGR$~\emph{or}~$\LHR$~ are non-empty}; \\
\text{score}_{G}+\text{score}_{H}, ~\text{if $G \bigtriangledown H$ is terminal}.\\
\end{cases}\]
\end{definition}

Note: It is only necessarily true that $score_{G+H} = score_{G}+score_{H}$ under continued conjunctive sum. \\

\begin{theorem}\label{thm: expected value scoring} Let $G$ and $H$ be simultaneous combinatorial games. Then

\begin{enumerate}
\renewcommand{\labelitemi}{{\bf $\circ$}}
\item $E( G\bigtriangledown H) = Ex(G)+Ex(H)$
\item $G \geq H$ if $Ex(G)\geq Ex(H)$.
\end{enumerate}
\end{theorem}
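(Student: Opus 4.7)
The plan is to prove part (1) by structural induction on the combined game tree of $G \bigtriangledown H$, and then deduce part (2) immediately from part (1) together with Definition \ref{defn:equality}.

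For part (1), the base case is when both $G$ and $H$ are terminal. By the definition of the continued conjunctive sum for scoring play, $G \bigtriangledown H$ is then terminal with $\text{score}_{G \bigtriangledown H} = \text{score}_G + \text{score}_H$, so $Ex(G \bigtriangledown H) = Ex(G) + Ex(H)$. For the inductive step, I consider the matrix $M(G \bigtriangledown H)$: each pure Left strategy is a pair $(L_i, L_k)$ of a Left strategy in $G$ and a Left strategy in $H$ (if one component is already terminal, only the other contributes strategies), and similarly for Right. The entry at row $(L_i, L_k)$ and column $(R_j, R_\ell)$ is the game $\,^{L_i}G^{R_j} \bigtriangledown \,^{L_k}H^{R_\ell}$, whose expected value by the inductive hypothesis is $Ex(\,^{L_i}G^{R_j}) + Ex(\,^{L_k}H^{R_\ell})$. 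Consequently, the reduced payoff matrix $M'(G \bigtriangledown H)$ has entries of the form $A_{ij} + B_{k\ell}$, where $A = M'(G)$ and $B = M'(H)$, and it remains to compute the value of this ``direct-sum'' matrix game. The key observation is that for any Left mixed strategy $\pi$ on $[m]\times[p]$ and Right mixed strategy $\sigma$ on $[n]\times[q]$, the expected payoff
\[ \sum_{i,k,j,\ell} \pi(i,k)\,\sigma(j,\ell)\,(A_{ij}+B_{k\ell}) \]
depends only on the marginals $\pi_1,\pi_2,\sigma_1,\sigma_2$, and equals $\pi_1^{\top} A\,\sigma_1 + \pi_2^{\top} B\,\sigma_2$. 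The outer max-min therefore separates into two independent minimax problems, yielding value $Ex(G)+Ex(H)$.

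For part (2), Definition \ref{defn:equality} requires $\alpha(G \bigtriangledown X) \geq \alpha(H \bigtriangledown X)$ for all $X$, where $\alpha$ is the relevant measure of winning; in the scoring setting this is $Ex$, and equivalently the outcome class (which is determined by the sign of $Ex$ under the ordering $\mathcal{L}>\mathcal{D}>\mathcal{R}$). Part (1) then gives $Ex(G \bigtriangledown X) = Ex(G) + Ex(X) \geq Ex(H) + Ex(X) = Ex(H \bigtriangledown X)$ for every $X$, from which the conclusion follows. The main delicate point is the matrix-game step in part (1): Left's mixed strategies in $G \bigtriangledown H$ range over all joint distributions on $[m]\times[p]$, not merely product distributions, so one must verify that optimising over these richer strategies still yields $Ex(G)+Ex(H)$. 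The marginal reduction above handles this cleanly, because the expected payoff is bilinear in the pairs of marginals and separates once they are extracted, so neither player can gain by correlating their play across components.
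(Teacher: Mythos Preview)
Your proof is correct and rests on the same idea as the paper's---independence of play in the two components under the continued conjunctive sum---but you supply substantially more rigour where the paper gives only a one-line appeal. The paper's entire argument for part~(1) is: ``In $G\bigtriangledown H$, play in the two games is independent and the final score in each is counted for the sum. Hence $Ex(G\bigtriangledown H) = Ex(G)+Ex(H)$.'' Your structural induction together with the marginal-reduction step for the matrix game makes precise what ``independence'' means and, in particular, addresses the point the paper skips: that Left's mixed strategies in $G\bigtriangledown H$ may correlate the choices in the two components, yet this cannot help because the payoff $A_{ij}+B_{k\ell}$ depends on $\pi$ and $\sigma$ only through their marginals. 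For part~(2) the two arguments coincide; note that the paper in fact proves both directions (an ``iff''), so you could add the trivial converse by taking $X$ terminal with score~$0$.
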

\begin{proof}
In $G\bigtriangledown H$, play in the two games is independent and the final score in each is counted for the sum. Hence 
$Ex(G\bigtriangledown H ) = Ex(G)+Ex(H)$.

Suppose $G\geq H$ then, for all $X$, we have
\begin{eqnarray*}
Ex(G\bigtriangledown X) &=& Ex(G)+Ex(X)\geq Ex(H\bigtriangledown X)=Ex(H)+Ex(X)
\end{eqnarray*}
and hence $Ex(G)\geq Ex(H)$.

If $Ex(G)\geq Ex(H)$ then the same inequalities hold.
\qed
\end{proof}

\begin{corollary}\label{cor:ccspo}
The quotient of simultaneous games played with the continued conjunctive sum and with the scoring convention is a total order that can be embedded in the rationals.\end{corollary}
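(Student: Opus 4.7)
The plan is to leverage Theorem~\ref{thm: expected value scoring} directly: since $G \geq H$ in the continued conjunctive sum with scoring convention is equivalent to $Ex(G) \geq Ex(H)$, the expected value function descends to a well-defined, order-preserving, injective map from the quotient into some subset of $\mathbb{R}$. I will then argue that the image in fact lies in $\mathbb{Q}$.

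First I would handle the equivalence. Applying Theorem~\ref{thm: expected value scoring} in both directions gives: $G = H$ (that is, $G \geq H$ and $H \geq G$) iff $Ex(G) = Ex(H)$. This immediately shows that the map $\Phi : [G] \mapsto Ex(G)$ is a well-defined injection from the quotient into $\mathbb{R}$, and by the same theorem $\Phi$ is order-preserving in both directions: $[G] \geq [H]$ iff $\Phi([G]) \geq \Phi([H])$. Pulling back the natural total order on $\mathbb{R}$ through $\Phi$ shows that the partial order on the quotient is in fact total.

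Next I would verify that $Ex(G) \in \mathbb{Q}$ for every simultaneous game $G$, by induction on the depth of the game tree. The base case is immediate because terminal positions are assigned values in $\{-1, 0, 1\} \subset \mathbb{Q}$. For the inductive step, $Ex(G)$ is the value of the finite two-person zero-sum matrix game $M'(G)$, whose entries are, by the induction hypothesis, rational numbers. Since the value of a finite matrix game with rational payoffs is rational (the minimax value is the optimum of a linear program with rational data, hence attained at a rational vertex), we obtain $Ex(G) \in \mathbb{Q}$. Consequently the image of $\Phi$ lies in $\mathbb{Q}$, giving the desired order-preserving embedding.

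The only genuine obstacle is the rationality claim, and it is mild: once one recalls that matrix-game values are computable by linear programming over the rationals, the induction goes through without incident. Everything else is an unpacking of Theorem~\ref{thm: expected value scoring}, so the corollary follows with essentially no extra combinatorial work.
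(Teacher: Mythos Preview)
Your proposal is correct and follows essentially the same route as the paper: use Theorem~\ref{thm: expected value scoring} to identify equivalence classes with their expected values, pull back the total order from the reals, and note that terminal values lie in $\{-1,0,1\}$ so all expected values are rational. The only difference is that you justify the rationality step explicitly via the linear-programming characterization of matrix-game values, whereas the paper simply asserts it; your version is slightly more rigorous but not a different argument.
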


\begin{proof} Two games are equal if they have the same expected value, therefore the equivalence classes are indexed by the common expected value. 
A terminal game has value of $-1$, $0$ or $1$ and thus the expected value of any game is a rational number.
Moreover, $G\geq H$ if $Ex(G)\geq Ex(H)$ thus the quotient forms a total order and the values are a subset of the rational numbers.
\qed
\end{proof}

It seems likely that the quotient is isomorphic to the rationals but we do not know of actual games to show this.

%----------------------------------------------------------------------------------------
 
 \section{Case Studies}\label{sec: case studies}
 
      We provide three case studies. 
      
First we look at \textsc{simultaneous clobber}, a dicot game (games where both players can  move from every
  non-empty subposition). Now, all simultaneous dicot games studied under extended normal play, under any of the three sums are 
  Draws and are therefore trivial. However, if we consider a
   different metric, we can continue to study dicot games under simultaneous moves. One interpretation is
    to assign a value to one player's actions, as exemplified in this study.
         
\textsc{simultaneous hackenbush}, the second case study, has properties which allow for easy computation of $v_A(G)$ for particular restricted graph classes.
     
     We analyze \textsc{subtraction squares}, specifically $SQ(\{a\}, \{b\})$ on general strips, but in this case we do not consider sums. The game is described in terms of a new measure, more general than the expected value.

      %--------------------------------------------------------------------------------------------------------------------------

\subsection{\textsc{simultaneous clobber}}
\bigskip
\noindent
Ruleset for \textsc{simultaneous clobber}.
\begin{itemize}
\renewcommand{\labelitemi}{{\bf $\circ$}}
\item Board: A finite graph, where each vertex is occupied by either an X or an O. \\
\item Players: Left and Right, who move simultaneously.\\
\item Moves: On a move, a player clobbers one of their opponents' adjacent pieces. Left is assigned X. Right is assigned O.  If players choose to clobber their opponent's piece which their opponent is also using to clobber theirs, both pieces disappear. \\
%\item Winning Condition: If a player cannot move, then they lose the game.
\end{itemize}

For example, $[OX]$ played simultaneously, after one move becomes $[\, \, \, ]$. If it was defined simply as a placement swap then the game would be \emph{loopy} (both players could insist on only choosing that move and the game would never end). 

 The scoring variant that we consider here is the number of $O$'s clobbered. This is an asymmetric game since the best Right can do is hope for a Draw. So we know that the outcome classes are restricted to Left wins and Draws.

First, we looked at \textsc{simultaneous clobber} played on the complete graph on $n$ vertices, $K_{n}$, where each vertex has an $O$ except for one which has an $X$. There are two possibilities: Left and Right choose matching vertices, and hence the game goes to zero. This can happen in $(n-1)$ ways. Or they don't match in their choices (i.e., Left clobbers one of Right's pieces and a different piece of Right takes the place of Left's piece which moved). This can happen in $(n-1) \times (n-2)$ ways. Hence

\begin{theorem}
The expected value of \textsc{simultaneous clobber} with one piece for Left on $K_{n}$ is defined by the following recurrence relation with initial value $K_{2} = 0$: 
\begin{align*}
Ex( K_{n}) &= \frac{1}{n-1}\left(0\right) + \frac{n-2}{n-1}\left(1+ Ex(K_{n-1})\right)\\
\end{align*}
which implies for $n\geq 2$,
\begin{align*}
 Ex(K_{n}) &=\frac{n}{2}-1 .
\end{align*}
\end{theorem}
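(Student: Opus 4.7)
The plan is to prove the theorem in two stages: first derive the recurrence by analyzing the $(n-1)\times(n-1)$ strategy matrix for one round of play on $K_n$, then solve the recurrence by induction on $n$.

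For the recurrence, I would label the vertices $0,1,\dots,n-1$ with X at vertex $0$ and O's elsewhere. Left's pure strategies are indexed by the target O she clobbers (vertex $i\in\{1,\dots,n-1\}$), and Right's pure strategies are indexed by which O he uses to clobber the X (vertex $j\in\{1,\dots,n-1\}$). The ruleset then gives two types of matrix entries. When $i=j$, Left's target is also Right's weapon, so by the ``both pieces disappear'' clause no O is ever counted as clobbered and Left has no X left, so the game is terminal with score $0$. When $i\ne j$, Left removes the O at $i$ and moves to $i$; Right's O at $j$ moves to $0$, leaving $j$ empty. The surviving configuration has one X and $n-2$ O's on a set of $n-1$ occupied vertices of the complete graph, which is (up to relabeling) the initial position of $K_{n-1}$. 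Hence the matrix entry is $1+Ex(K_{n-1})$ — one O clobbered now, plus the expected future score.

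With this matrix in hand, writing $\beta=1+Ex(K_{n-1})$, the expected value under mixed strategies $p,q$ is
\[V(p,q)=\beta\sum_{i\ne j}p_iq_j=\beta\Bigl(1-\sum_i p_iq_i\Bigr).\]
Since $Ex(K_{n-1})\ge 0$ by the inductive hypothesis (so $\beta>0$), Left wants to minimize $\sum_i p_iq_i$ and Right wants to maximize it. This is the standard symmetric matching game: Left's best response to any $q$ achieves value $\min_i q_i$, which Right maximizes by playing uniformly, and symmetrically for Left. At the unique symmetric equilibrium $p_i=q_i=1/(n-1)$, we obtain $\sum_i p_iq_i=1/(n-1)$, and therefore
\[Ex(K_n)=\beta\cdot\frac{n-2}{n-1}=\frac{1}{n-1}(0)+\frac{n-2}{n-1}\bigl(1+Ex(K_{n-1})\bigr),\]
establishing the recurrence.

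For the closed form I would proceed by induction. The base case $Ex(K_2)=0=2/2-1$ is immediate (Left's X has exactly one O to clobber and no Right response changes the outcome). Assuming $Ex(K_{n-1})=(n-3)/2$, the recurrence gives
\[Ex(K_n)=\frac{n-2}{n-1}\left(1+\frac{n-3}{2}\right)=\frac{n-2}{n-1}\cdot\frac{n-1}{2}=\frac{n}{2}-1,\]
completing the induction. The main obstacle is the symmetry and minimax argument that locks in uniform play as optimal; everything else is a direct case analysis of the clobber ruleset combined with the telescoping cancellation $(n-2)/(n-1)\cdot(n-1)/2$ in the inductive step.
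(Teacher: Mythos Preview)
Your proposal is correct and follows the same line as the paper's brief argument preceding the theorem: a case split on whether Left's target and Right's attacking piece coincide, leading to the recurrence, followed by resolution of the closed form. Your treatment is in fact more complete than the paper's, which simply counts the $(n-1)$ matching and $(n-1)(n-2)$ non-matching strategy pairs and implicitly treats them as equiprobable; you supply the minimax justification that uniform mixing is optimal for both players (using $\beta>0$), and you verify the closed form by induction, neither of which the paper spells out.
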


\begin{table}
\begin{center}
\begin{tabular}{|c|c|}\hline
Positions& Values \\\hline
$[ \ldots OOXOO\ldots]$&$(1+\sqrt{5})/4$\\\hline
$[\ldots OOXXOO \ldots]$&$1/2$\\\hline
$[\ldots OOXO]$&$(-1+\sqrt{5})/2$\\\hline
$[\ldots OOX]$&$0$\\\hline
\end{tabular}
\caption{Values for some \textsc{simultaneous clobber} positions.}\label{ClobberValues}
\end{center}
\end{table}
Next, we look at this game on an infinite path, starting with one piece for Left $[ \ldots OOXOO \ldots]$. Then we place two Left pieces adjacent to one another, $[\ldots OXXO \ldots]$. The analysis of subsequent positions which involve increasing the distance between Left pieces and where she only has two pieces on the infinite path, are left to the reader. Preliminary values are shown in Table~\ref{ClobberValues}. 

Consider the \textsc{simultaneous clobber} position $G = [\ldots OOXOXOO\ldots]$. We label the options as $A, B, C, D,$ and $E$, where $B$ and $D$ correspond to the two $X$'s (left to right) and $A$, $C$, and $E$ correspond to the $O$'s (left to right) alternating between the $X$'s. Then $G^{\mathcal{R}} = \{A_{R}, C_{L}, C_{R}, E_{L}\}$, where $A_{R}$ means that Right can move the $O$ in position $A$ to the right. Similarly $\,^{\mathcal{L}}G = \{B_{L}, B_{R}, D_{L}, D_{R}\}$. Under simultaneous moves, we obtain the matrix\footnote{We evaluate $M'(G)$ since we have replaced positions by their expected values.} in Figure~\ref{ex: disjunctive}.

 \begin{figure}[H]
  \begin{center}
 \begin{minipage}[b]{0.45\linewidth}
\[
\begin{blockarray}{ccccc}
&A_{R}&C_{L}&C_{R}&E_{L}\\
\begin{block}{c[cccc]}
B_{L}&\frac{-1+\sqrt{5}}{2}&\frac{3}{2}&1&1\\
B_{R}&\frac{3}{2}&0&0&1\\
D_{L}&1&0&0&\frac{3}{2}\\
D_{R}&1&1&\frac{3}{2}&\frac{-1+\sqrt{5}}{2}\\
\end{block}
\end{blockarray}
\]
\caption{$M'([\ldots OOXOXOO\ldots])$. }\label{ex: disjunctive}
\end{minipage}
 \begin{minipage}[b]{0.45\linewidth}
\[
\begin{blockarray}{ccc}
&A'_{R}&D'_{L}\\
\begin{block}{c[cc]}
B'_{L}&0&1\\
C'_{R}&1&0\\
\end{block}
\end{blockarray} + 1
\]
\caption{$M'(B_{L}C_{L})$.}\label{ex: disjunctive2}
\end{minipage}
\end{center}
\end{figure}

Consider $B_{L}C_{L} = [\ldots OX] + [XO\ldots] +1$. Since $1$ is a score, Left and Right are playing in $[\ldots OX] + [XO\ldots]$, and we need to find its expected value and add $1$. If we calculate  $Ex(B_{L}C_{L})$ by using the reduced values (see Table~\ref{ClobberValues}) we find the expected value is $1$. But actually calculating the expected value of the disjunctive sum, we obtain expected value $\frac{3}{2}$ (see Figure~\ref{ex: disjunctive2}); i.e., 
 \begin{eqnarray}
 Ex(G+H) &\neq& Ex(Re(G) + Re(H)).
 \end{eqnarray}
This exemplifies once again that under disjunctive sum, we encounter problems with using previously defined `values' in a different sum.

We now examine a combinatorial game under simultaneous moves which allows the development of interesting results. 

%------------------------------------------------------------
\subsection{\textsc{simultaneous hackenbush}}\label{case study hackenbush}

\noindent
Ruleset for \textsc{simultaneous hackenbush}.
\begin{itemize}
\renewcommand{\labelitemi}{{\bf $\circ$}}
\item Board: A finite graph, where the edges are coloured either blue, red or green with a special set of root vertices connected to the ground. \\
\item Players: Left and Right, who move simultaneously.\\
\item Moves: Left can remove a blue or green edge, Right can remove a red or a green edge. 
After a simultaneous move, any connected component no longer connected to the ground is also deleted.
\\
\end{itemize}
 Note that in the figures, blue edges are represented by solid straight lines while red are dashed lines. 

If players move in the same component, they independently remove their chosen edge and all sub-graphs which are disconnected from the ground are eliminated.

\begin{figure}[H]
\definecolor{ffqqtt}{rgb}{1.,0.,0.2}
\definecolor{ffqqqq}{rgb}{1.,0.,0.}
\definecolor{qqqqff}{rgb}{0.,0.,1.}
\definecolor{xfqqff}{rgb}{0.4980392156862745,0.,1.}
\definecolor{ttqqqq}{rgb}{0.2,0.,0.}
\definecolor{uuuuuu}{rgb}{0.26666666666666666,0.26666666666666666,0.26666666666666666}
\begin{tikzpicture}[line cap=round,line join=round,>=triangle 45,x=1.0cm,y=1.0cm]
\clip(-4.3,-1) rectangle (16,2.5);
\draw [line width=2.pt,color=qqqqff] (0.,1.)-- (0.,0.);
\draw [line width=2.pt,color=qqqqff] (1.,1.)-- (1.,0.);
\draw [line width=2.pt,dash pattern=on 5pt off 5pt,color=ffqqqq] (0.,2.)-- (0.,1.);
\draw [line width=2.pt,dash pattern=on 5pt off 5pt,color=ffqqqq] (1.,2.)-- (1.,1.);
\draw (0.25,1.14) node[anchor=north west] {$\odot$};
\draw [line width=2.pt,color=qqqqff] (3.,0.)-- (3.,1.);
\draw [line width=2.pt,dash pattern=on 5pt off 5pt,color=ffqqtt] (3.,1.)-- (3.,2.);
\draw [line width=2.pt,color=qqqqff] (4.,1.)-- (4.,2.);
\draw [line width=2.pt,color=qqqqff] (4.,0.)-- (4.,1.);
\draw (3.25,1.14) node[anchor=north west] {$\odot$};
\draw [line width=2.pt] (-0.5,-0.04)-- (1.5,-0.04);
\draw [line width=2.pt] (2.52,-0.04)-- (4.6,-0.02);
\draw (-1,-0.34) node[anchor=north west] {\textbf{$G=$}};
\draw (-0.25,-0.34) node[anchor=north west] {\textbf{$G_{1}$}};
\draw (0.25,-0.34) node[anchor=north west] {\textbf{$\odot$}};
\draw (0.75,-0.34) node[anchor=north west] {\textbf{$G_{2}$}};
\draw (2.05,-0.32) node[anchor=north west] {\textbf{$H=$}};
\draw (2.75,-0.32) node[anchor=north west] {\textbf{$H_{1}$}};
\draw (3.25,-0.32) node[anchor=north west] {\textbf{$\odot$}};
\draw (3.75,-0.32) node[anchor=north west] {\textbf{$H_{2}$}};
\begin{scriptsize}
\draw [fill=uuuuuu] (0.,0.) circle (1.5pt);
\draw [fill=ttqqqq] (0.,1.) circle (1.5pt);
\draw [fill=ttqqqq] (0.,2.) circle (1.5pt);
\draw [fill=ttqqqq] (1.,0.) circle (1.5pt);
\draw [fill=ttqqqq] (1.,1.) circle (1.5pt);
\draw [fill=ttqqqq] (1.,2.) circle (1.5pt);
\draw[color=qqqqff] (-0.26,0.67) node {$x$};
\draw[color=qqqqff] (0.74,0.67) node {$y$};
\draw[color=ffqqqq] (-0.26,1.67) node {$z$};
\draw[color=ffqqqq] (0.74,1.67) node {$w$};
\draw [fill=ttqqqq] (3.,0.) circle (1.5pt);
\draw [fill=ttqqqq] (3.,1.) circle (1.5pt);
\draw [fill=ttqqqq] (3.,2.) circle (1.5pt);
\draw [fill=ttqqqq] (4.,0.) circle (1.5pt);
\draw [fill=ttqqqq] (4.,1.) circle (1.5pt);
\draw [fill=ttqqqq] (4.,2.) circle (1.5pt);
\draw[color=qqqqff] (2.76,0.61) node {$x'$};
\draw[color=ffqqtt] (2.7,1.69) node {$z'$};
\draw[color=qqqqff] (3.76,1.65) node {$w'$};
\draw[color=qqqqff] (3.76,0.69) node {$y'$};
\end{scriptsize}
\end{tikzpicture}
\caption{\textsc{simultaneous red-blue hackenbush} positions.}\label{example: disjunctive sum}
\end{figure}
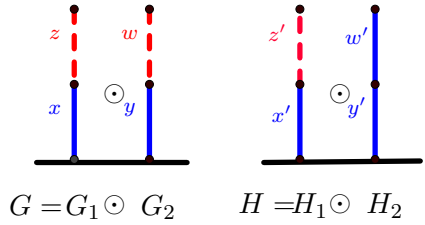

\begin{example}
The disjunctive sum of $G_{1}$ and $G_{2}$, two \textsc{simultaneous red-blue hackenbush stalks}, is shown in Figure~\ref{example: disjunctive sum}. Note that $^{\mathcal{L}}G = \{\,^{x}G,\,^{y}G\}$ and $G^{\mathcal{R}} = \{G^{z}, G^{w}\}$. Hence $\,^{\mathcal{L}}G^{\mathcal{R}} = \left\{\,^{x}G^{z},\,^{y}G^{z},\,^{x}G^{w},\,^{y}G^{w}\right\}$. We can represent simultaneous play in a matrix. Note that if Left plays $x$ and Right plays $z$, then $y$ and $w$ remain as options for the next round, and, considering outcomes, play is represented as: 

\begin{figure}[H]
\begin{center}
\begin{minipage}[b]{0.28\linewidth}
  \[
\begin{blockarray}{cc}
&$$G^{w}$$\\
\begin{block}{c[c]}
  $$\,^{y}G$$ & $0$ \\
\end{block}
\end{blockarray}
 \] 
 \caption{Game.}
 \end{minipage}
 \begin{minipage}[b]{0.28\linewidth}
   \[
\begin{blockarray}{cc}
&$$G^{w}$ $ \\
\begin{block}{c[c]}
 $$\,^{y}G$$ &$ D$ \\
\end{block}
\end{blockarray}
\]
\caption{Outcome.}
 \end{minipage}
 \end{center}
 \end{figure}

Similar results hold if the roles of $x$ and $y$ as well as $z$ and $w$ were interchanged.  Hence, recursively, the game $G$ is represented by the  matrices of Figures~\ref{fig: outcomes G}, \ref{fig: expected values G}, and \ref{fig: scores G}, for its outcomes, expected values, and score, respectively.

\begin{figure}[H]
\begin{center}
 \begin{minipage}[b]{0.25\linewidth}
  \[
\begin{blockarray}{ccc}
&$$G^{z}$$ & $$G^{w}$$  \\
\begin{block}{c[cc]}
 $$\,^{x}G$$ & D & L \\
  $$\,^{y}G$$ & L & D \\
\end{block}
\end{blockarray}
 \]
 \caption{Outcomes.}\label{fig: outcomes G}
 \end{minipage}
\hspace{0.5cm}
 \begin{minipage}[b]{0.3\linewidth}
  \[
\begin{blockarray}{ccc}
&$$G^{z}$$ &$$G^{w}$$  \\
\begin{block}{c[cc]}
 $$\,^{ x}G$$ & 0 & 1 \\
  $$\,^{y}G$$ & 1 & 0 \\
\end{block}
\end{blockarray}
 \]
 \caption{Expected Values.}\label{fig: expected values G}
 \end{minipage}
  \hspace{0.5cm}
 \begin{minipage}[b]{0.25\linewidth}
  \[
\begin{blockarray}{ccc}
&$$G^{z}$$ &$$G^{ w}$$  \\
\begin{block}{c[cc]}
  $$\,^{x}G$$ & 0 & 1 \\
 $$\,^{ y}G$$ & 1 & 0 \\
\end{block}
\end{blockarray}
 \]
 \caption{Scores.}\label{fig: scores G}
 \end{minipage}

 \end{center}
\end{figure}
\end{example}

Similarly, for $H$ we have, outcomes, expected values and scores are as shown in Figures~\ref{fig: outcomes H}, \ref{fig: expected values H}, and \ref{fig: scores H}, respectively. 

\begin{figure}[h]
\begin{center}
 \begin{minipage}[b]{0.25\linewidth}
  \[
\begin{blockarray}{cc}
&$$H^{z'}$$  \\
\begin{block}{c[c]}
 $$\,^{ x'}H$$ & L  \\
 $$\,^{y'}H$$ & L  \\
 $$\,^{ w'}H$$ &L\\
\end{block}
\end{blockarray}
 \]
 \caption{Outcomes.}\label{fig: outcomes H}
 \end{minipage}
\hspace{0.5cm}
 \begin{minipage}[b]{0.3\linewidth}
  \[
\begin{blockarray}{cc}
&$$H^{z'}$$  \\
\begin{block}{c[c]}
 $$ \,^{x'}H$$ & 1 \\
  $$\,^{y'}H$$ & 1 \\
  $$\,^{w'}H$$&1\\
\end{block}
\end{blockarray}
 \]
  \caption{Expected Values.}\label{fig: expected values H}
 \end{minipage}
  \hspace{0.5cm}
 \begin{minipage}[b]{0.25\linewidth}
  \[
\begin{blockarray}{cc}
&$$H^{z'}$$\\
\begin{block}{c[c]}
  $$\,^{x'}H$$ & 2 \\
 $$\, ^{y'}H$$ & 1\\
$$\,^{w'}H$$&2\\
\end{block}
\end{blockarray}
 \]
   \caption{Scores.}\label{fig: scores H}
 \end{minipage}
 \end{center}
\end{figure}

\begin{example}\label{example: win lose}
Consider $G_{1} \wedge G_{2}$ pictured in Figure~\ref{lose quick}. On the first turn, Right can guarantee a win in $G_{1}$ by playing $G_{1}^{c}$. Left knows this and hence rather than losing the game by playing $^{f}G_{2}$, she will play $^{d}G_{2}$ and force the overall game to be a Draw rather than a Right win. 
\end{example}

We introduce cordons, which are rooted graphs slightly more complex than rooted paths.

\begin{definition}
A \textit{cordon} consists of i) two sets of vertices $V_{1} = \{v_{0}, v_{1},\ldots,v_{n}\}$, where $v_0$ is the \emph{root}, $v_n$ is the \emph{top vertex} and the others are called \textit{interior} vertices, 
and $V_{2} = \{l_{1},l_{2},\ldots,l_{k}\}$, ii) an increasing sequence $\{a(1), a(2),\ldots,a(k)\}$, where $0<a(1), a(k)\leq n-1$, and iii) the edges are $v_{i}v_{i-1}$, $i = 1,2,\ldots, n$ and $l_{j}v_{a(j)}$, $j=1,\ldots ,k$. The vertex $v_{a(j)}$ is called an \emph{attachment vertex}. If $V_2$ is empty then we call the cordon a \emph{stalk}. See Figure~\ref{cordon} for an example of a cordon and $G_{1}$ and $G_{2}$, in Figure~\ref{lose quick}, for examples of stalks.
 \end{definition} 

 \begin{figure}[ht]
\begin{minipage}[b]{0.5\linewidth}
\centering
\definecolor{xfqqff}{rgb}{0.4980392156862745,0.,1.}
\definecolor{ffqqqq}{rgb}{1.,0.,0.}
\definecolor{qqqqff}{rgb}{0.,0.,1.}
\definecolor{ttqqqq}{rgb}{0.2,0.,0.}
\definecolor{uuuuuu}{rgb}{0.26666666666666666,0.26666666666666666,0.26666666666666666}
\begin{tikzpicture}[line cap=round,line join=round,>=triangle 45,x=1.0cm,y=1.0cm]
\draw [line width=2.pt,dash pattern=on 5pt off 5pt,color=ffqqqq] (4.,4.)-- (4.,3.);
\draw [line width=2.pt,color=qqqqff] (4.,3.)-- (4.,2.);
\draw [line width=2.pt,dash pattern=on 5pt off 5pt,color=ffqqqq] (4.,2.)-- (4.,1.);
\draw [line width=2.pt,color=qqqqff] (4.,1.)-- (4.,0.);
\draw [line width=2.pt,dash pattern=on 5pt off 5pt,color=ffqqqq] (0.,3.)-- (0.,2.);
\draw [line width=2.pt,color=qqqqff] (0.,2.)-- (0.,1.);
\draw [line width=2.pt,dash pattern=on 5pt off 5pt,color=ffqqqq] (0.,1.)-- (0.,0.);
\begin{scriptsize}
\draw [fill=uuuuuu] (4.,0.) circle (1.5pt);
\draw [fill=ttqqqq] (4.,1.) circle (1.5pt);
\draw [fill=ttqqqq] (4.,2.) circle (1.5pt);
\draw [fill=ttqqqq] (4.,3.) circle (1.5pt);
\draw [fill=ttqqqq] (4.,4.) circle (1.5pt);
\draw[color=ffqqqq] (4.26,3.67) node {$g$};
\draw[color=qqqqff] (4.26,2.67) node {$f$};
\draw[color=ffqqqq] (4.26,1.67) node {$e$};
\draw[color=qqqqff] (4.26,0.67) node {$d$};
\draw (2,2) node[anchor=north west] {$\bf{\wedge}$};
\draw [fill=uuuuuu] (0.,0.) circle (1.5pt);
\draw [fill=ttqqqq] (0.,1.) circle (1.5pt);
\draw [fill=ttqqqq] (0.,2.) circle (1.5pt);
\draw [fill=ttqqqq] (0.,3.) circle (1.5pt);
\draw[color=ffqqqq] (-0.26,2.67) node {$c$};
\draw[color=qqqqff] (-0.26,1.67) node {$b$};
\draw[color=ffqqqq] (-0.26,0.67) node {$a$};
\draw (-0.25,-0.5) node[anchor=north west] {$G_{1}$};
\draw (3.75,-0.5) node[anchor=north west] {$G_{2}$};
\end{scriptsize}
\end{tikzpicture}
\caption{Timing Issues.}\label{lose quick}
\end{minipage}
\hfill
\begin{minipage}[b]{0.4\linewidth}
\centering
\definecolor{ffqqqq}{rgb}{1.,0.,0.}
\definecolor{qqqqff}{rgb}{0.,0.,1.}
\definecolor{ttqqqq}{rgb}{0.2,0.,0.}
\begin{tikzpicture}[line cap=round,line join=round,>=triangle 45,x=1.0cm,y=1.0cm]
\draw [line width=2.pt,color=qqqqff] (0.,5.)-- (0.,4.);
\draw [line width=2.pt,color=qqqqff] (0.,4.)-- (0.,3.);
\draw [line width=2.pt,color=qqqqff] (0.,3.)-- (0.,2.);
\draw [line width=2.pt,color=qqqqff] (0.,2.)-- (0.,1.);
\draw [line width=2.pt,color=qqqqff] (0.,1.)-- (0.,0.);
\draw [line width=2.pt,dash pattern=on 5pt off 5pt,color=ffqqqq] (0.,1.)-- (1.,2.);
\draw [line width=2.pt,color=qqqqff] (0.,3.)-- (1.,4.);
\begin{scriptsize}
\draw [fill=ttqqqq] (0.,0.) circle (1.5pt);
\draw [fill=ttqqqq] (0.,1.) circle (1.5pt);
\draw [fill=ttqqqq] (0.,2.) circle (1.5pt);
\draw [fill=ttqqqq] (0.,3.) circle (1.5pt);
\draw [fill=ttqqqq] (0.,4.) circle (1.5pt);
\draw [fill=ttqqqq] (1.,2.) circle (1.5pt);
\draw [fill=ttqqqq] (1.,4.) circle (1.5pt);
\draw [fill=ttqqqq] (0.,5.) circle (1.5pt);
\end{scriptsize}
\end{tikzpicture}
\caption{A cordon.}\label{cordon}
\end{minipage}
\end{figure}

 \subsubsection{Results for Extended Normal Play}
 \begin{figure}[ht]
\begin{minipage}[b]{0.5\linewidth}
\centering
\definecolor{qqqqff}{rgb}{0.,0.,1.}
\definecolor{ttqqqq}{rgb}{0.2,0.,0.}
\definecolor{uuuuuu}{rgb}{0.26666666666666666,0.26666666666666666,0.26666666666666666}
\begin{tikzpicture}[line cap=round,line join=round,>=triangle 45,x=1.0cm,y=1.0cm]
\draw [line width=2.pt,color=qqqqff] (0.,0.)-- (0.,1.);
\draw [line width=2.pt,color=qqqqff] (0.,1.)-- (0.,2.);
\draw (-0.2,3.56) node[anchor=north west] {A};
\draw [rotate around={92.26065810668713:(-0.017047258424818752,3.3522410772208486)},line width=2.pt] (-0.017047258424818752,3.3522410772208486) ellipse (1.3530175815962302cm and 0.8782898812899246cm);
\begin{scriptsize}
\draw [fill=uuuuuu] (0.,0.) circle (2.0pt);
\draw [fill=ttqqqq] (0.,1.) circle (1.5pt);
\draw [fill=ttqqqq] (0.,2.) circle (1.5pt);
\end{scriptsize}
\end{tikzpicture}
\caption{Two-blue based position.}\label{two-blue based}
\end{minipage}
\hfill
\begin{minipage}[b]{0.4\linewidth}
\centering
\definecolor{ududff}{rgb}{0.30196078431372547,0.30196078431372547,1.}
\definecolor{qqqqff}{rgb}{0.,0.,1.}
\definecolor{ttqqqq}{rgb}{0.2,0.,0.}
\begin{tikzpicture}[line cap=round,line join=round,>=triangle 45,x=1.0cm,y=1.0cm]
\draw [line width=2.pt,color=qqqqff] (0.,1.)-- (0.,2.);
\draw (-0.08,3.56) node[anchor=north west] {A};
\draw [rotate around={92.26065810668713:(-0.017047258424818752,3.3522410772208486)},line width=2.pt] (-0.017047258424818752,3.3522410772208486) ellipse (1.3530175815962302cm and 0.8782898812899246cm);
\draw [line width=2.pt,color=ududff] (-0.24,4.22)-- (0.28,3.88);
\begin{scriptsize}
\draw [fill=ttqqqq] (0.,1.) circle (1.5pt);
\draw [fill=ttqqqq] (0.,2.) circle (1.5pt);
\draw [fill=ttqqqq] (-0.24,4.22) circle (1.5pt);
\draw [fill=ttqqqq] (0.28,3.88) circle (1.5pt);
\end{scriptsize}
\end{tikzpicture}
\caption{Blue* based position.}\label{blue* based position}
\end{minipage}
\end{figure}
We call a \textsc{simultaneous hackenbush} position which is rooted with one blue edge followed by anything else a \emph{blue-based} position. We call a position a \emph{blue* based position} if it is blue-based with at least one other edge somewhere else in the position (see Figure~\ref{blue* based position}). A \emph{two-blue based} position starts with two consecutive blue edges followed by anything above it, and no additional edge at $v_{1}$ (see Figure~\ref{two-blue based}). In both figures, A is a generic completion to the \textsc{simultaneous hackenbush} position.
 
\begin{lemma}\label{lem: outcomes}
Consider a \textsc{simultaneous hackenbush} position. If the first two edges are blue, followed by anything above it, then Left wins.
\end{lemma}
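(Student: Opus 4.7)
The plan is to exhibit a single pure Left strategy on the opening move which guarantees a Left win regardless of Right's simultaneous response. Let $G$ be a position whose first two edges from the root are the blue edges $v_0v_1$ and $v_1v_2$, with an arbitrary coloured subgraph attached above $v_2$. Left's proposed strategy is to chop $v_1v_2$.

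First I would show that every entry in the row of $M(G)$ corresponding to Left playing $v_1v_2$ equals the outcome $\mathcal{L}$. Fix any pure Right strategy: Right removes some red or green edge $e$, or has no move at all. By the two-blue hypothesis, no edge of $G$ other than $v_0v_1$ and $v_1v_2$ is incident to $\{v_0,v_1\}$, so $e$ (if it exists) lies in the subgraph above $v_2$. After simultaneous resolution, $v_1v_2$ is gone, hence $v_2$ together with every edge above it is disconnected from the ground and deleted by the \textsc{hackenbush} rule; this sweep also removes $e$. The surviving position is $G' = \{v_0v_1\ \text{blue}\}$.

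Next I would check that $G'$ is a terminal Left win under the extended normal play convention: Left still has the move $v_0v_1$, so the set of Left options is non-empty, while Right has no red or green edge, so the set of Right options and hence the set of simultaneous options are both empty. Thus $G'$ is terminal and by the definition of $o_S$ we have $o_S(G') = \mathcal{L}$. Because every entry of Left's row for $v_1v_2$ is $\mathcal{L}$, Left secures outcome $\mathcal{L}$ no matter what Right plays, so $o_S(G) = \mathcal{L}$. The only point requiring care is the two-blue hypothesis itself — specifically that nothing is attached at $v_1$ apart from the two prescribed edges — since this is what forces every Right move to live in (and be destroyed together with) the subgraph above $v_2$. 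No real obstacle arises beyond invoking this hypothesis at the right moment.
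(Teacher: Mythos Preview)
Your approach is essentially the same as the paper's: Left chops the second blue edge $v_1v_2$, which disconnects (and hence deletes) everything above $v_2$, leaving the single blue edge $v_0v_1$ as a terminal Left win. The paper handles the degenerate case ``all edges are blue, so Right has no move'' as a separate sentence before invoking this strategy, whereas you fold it into the phrase ``or has no move at all''; you might make that case explicit (if $\GR=\emptyset$ then $G$ is already terminal with $o_S(G)=\mathcal{L}$), but otherwise the arguments coincide.
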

\begin{proof}
 Either all the edges are blue and Right has no move, or there are moves for Right, but Left can always remove the second edge from the bottom on the stalk which interferes with all of Right's moves (eliminating them). Then Left wins. 
 \qed
\end{proof}

Based on Lemma~\ref{lem: outcomes} we conclude the following results: 

\begin{proposition}
In a conjunctive sum of \textsc{simultaneous red-blue hackenbush} positions, where at least one component is two-blue based and all other components are blue* based,  then Left wins. 
\end{proposition}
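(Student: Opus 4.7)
The plan is to construct a one-round winning strategy for Left, generalizing the single-component argument of Lemma~\ref{lem: outcomes} to the conjunctive sum setting. Label the components so that $C_1$ is the two-blue based component with bottom edges $v_0v_1$ and $v_1v_2$ (both blue), and let $C_j$ ($j \geq 2$) be the other blue* based components; by the blue* condition, each $C_j$ has a root blue edge $e_j$ and at least one further blue edge $f_j \ne e_j$ that Left could play without touching $e_j$.

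Before running the strategy I would dispose of the degenerate case in which $C_1 \wedge \cdots \wedge C_n$ is already terminal at the start: this happens exactly when some component contains no red edges, but each such terminal component still has its bottom blue edge, so Left has a move in every terminal component and Right has none, and the extended normal play convention gives Left the win with nobody moving.

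In the non-trivial case, Left's strategy is to play $v_1v_2$ in $C_1$ and $f_j$ in each $C_j$ for $j\ge 2$. The crux in $C_1$ is the "no additional edge at $v_1$" clause of two-blue based: once $v_1v_2$ is removed, $v_2$ and everything above it, including any red edge Right may have simultaneously picked in $C_1$, are severed from the ground and deleted by the hackenbush reduction rule, leaving $C_1$ reduced to the single blue edge $v_0v_1$. In each $C_j$ on the other hand, the root blue $e_j$ is untouched: Right can only remove red edges and Left deliberately plays $f_j \ne e_j$, so Left retains the move $e_j$ in $C_j$ regardless of what Right does there.

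To finish I would apply the winning condition after the round. The reduced $C_1$ is terminal since Right has no move, so the whole conjunctive sum ends; in $C_1$ Left still has $v_0v_1$ while Right has none. Any $C_j$ that also happens to be terminal at this point must be terminal because Right has exhausted his red options (Left having preserved $e_j$), so again Left has a move and Right does not. Hence in every terminal component Left has an option and Right does not, so Left wins. The main delicacy is the bookkeeping of simultaneous edge removals together with the induced deletion rule: one has to invoke both defining features of "two-blue based" (the second blue edge and the vacancy at $v_1$) to absorb Right's response in $C_1$, and lean on the extra blue edge supplied by "blue* based" so that Left can always keep the bottom blue $e_j$ in reserve in each remaining component.
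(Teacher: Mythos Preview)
Your proof is correct and follows essentially the same strategy as the paper's: Left plays the second blue edge $v_1v_2$ in the two-blue based component and a non-root blue edge in each remaining component, so that after one round $C_1$ is terminal with the root blue edge still available to Left and every other component still has its root blue edge in reserve. Your write-up is a bit more careful than the paper's---you separately dispose of the already-terminal case and you explicitly invoke the ``no additional edge at $v_1$'' clause to justify that Right's simultaneous move in $C_1$ is wiped out---but the underlying argument is identical.
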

\begin{proof}
Consider the first move within the conjunctive sum of \textsc{simultaneous red-blue hackenbush} positions satisfying the given properties. Within a two-blue based position (see Figure~\ref{two-blue based}), Left will remove the second blue edge from the root. This move guarantees that she will win this component since Right does not have a move on the next round and she does. Now, we need to ensure that Left has a move in all other components as well. Given the properties, there are at least two Left options in all other components: the bottom blue edge, and another edge somewhere else in the connected component. She chooses the latter option, to ensure that the components don't terminate (with her as the loser). In the conjunctive sum, she ends the overall game on the first move and is the winner.  \qed
\end{proof}

\begin{proposition}\label{prop: Right cannot lose}
For \textsc{simultaneous red-blue hackenbush stalks} which are purely alternating, starting with a blue edge and ending with a red edge, Right cannot lose. 
\end{proposition}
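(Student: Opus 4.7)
The plan is to exhibit an explicit pure strategy for Right---always remove the topmost red edge---and prove by induction on the length of the stalk that under this strategy the game terminates in the empty position, hence a Draw, so Right cannot lose. By the definition of the extended normal play winning convention, showing that the outcome is a Draw is exactly showing that Right does not lose.

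First I would do the single-round analysis. Label the edges $1, 2, \ldots, 2n$ from the ground upward, so that blue sits at the odd positions and red at the even positions, with the topmost edge at position $2n$ being red. If Left removes the blue edge at some odd position $i$ with $1 \leq i \leq 2n-1$ and Right removes the red edge at position $2n$, then the lower of the two cuts is at position $i$, so every edge at positions $i+1, \ldots, 2n$ loses its connection to the ground and is discarded. The surviving board consists of edges $1, \ldots, i-1$. Because $i$ is odd, $i-1$ is even, and this substalk is again either empty (when $i=1$) or a purely alternating blue--red stalk of even length starting with blue and ending with red; moreover its length is at most $2n-2 < 2n$.

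I would then induct on $n$, the number of blue--red pairs. The base case $n=0$ is the empty board, which is terminal with both option sets empty and gives outcome Draw, so Right does not lose. For the inductive step, the reduction above shows that after one round under Right's strategy the position is a strictly shorter stalk of the same shape, so the inductive hypothesis applies and Right does not lose in the continuation. Strict length decrease guarantees termination in finitely many rounds, necessarily at the empty board.

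I do not foresee any real obstacle. The mild point worth checking is that the invariant ``empty, or purely alternating of even length starting blue and ending red'' is preserved by Right's strategy and that it rules out ever entering a Left-wins terminal configuration: whenever the stalk is nonempty both players have legal moves (Left has a blue at position $1$, Right has a red at position $2n$), so play continues and Right's chosen top red edge is always available; the only terminal position ever reached is the empty board, which is a Draw rather than a Left win.
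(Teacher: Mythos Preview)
Your proposal is correct and uses the same strategy as the paper---Right always removes the highest red edge---with essentially the same underlying idea that this keeps the position within the class of alternating blue--red stalks until the empty board is reached. The paper's proof is terser, simply noting that every blue edge has a red edge directly above it and that when Left finally takes the rooted blue edge Right simultaneously takes the red above it, ending in a Draw; your explicit induction on $n$ and verification of the invariant make the same argument more rigorous.
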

\begin{proof}
For every blue edge, there is a red edge directly above it. Right's strategy is to play the highest red edge available. When Left removes the last blue edge (rooted), Right will have an option to remove the red edge directly above it, the game is over and it is a Draw.\qed
\end{proof} 

\begin{proposition}
For \textsc{simultaneous red-blue hackenbush stalks} which are purely alternating, starting with a blue edge and ending with a blue edge, Right cannot use this component to force a Draw. 
\end{proposition}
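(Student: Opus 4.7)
The plan is to prove this by induction on the number $k$ of blue edges, strengthening the statement from ``Right cannot force a Draw'' to ``Left can force a win.'' Let the stalk have edges $B_1, R_1, B_2, R_2, \ldots, R_{k-1}, B_k$ from the ground upward, where $B_i$ is the $i$-th blue edge and $R_i$ is the $i$-th red edge, so there are $k$ blue edges and $k-1$ red edges. Because Left wins is strictly better than Draw under the extended normal play convention, the proposition follows immediately from the stronger claim.

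For the base case $k=1$, the stalk is a single blue edge. Then $\GR = \emptyset$, so the position is terminal, Left has a move and Right does not, and Left wins outright. For the inductive step with $k \geq 2$, I would propose the concrete strategy: Left plays the top blue edge $B_k$ on the first move. Right's only possible moves are the red edges $R_1, \ldots, R_{k-1}$, so Right must choose some $R_i$ with $1 \leq i \leq k-1$. The key observation is that removing $R_i$ severs the path from the ground at vertex $v_{2i-1}$, so every edge lying above $R_i$ (namely $B_{i+1}, R_{i+1}, \ldots, B_k$) becomes disconnected and is deleted. Left's own removal of $B_k$ is therefore absorbed into this cascade. What remains is precisely $B_1, R_1, \ldots, R_{i-1}, B_i$, an alternating stalk that begins and ends with blue and contains $i < k$ blue edges. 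By the inductive hypothesis, Left wins this smaller stalk, so play continues from a Left-winning position.

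Since every Right response lands in a Left-winning position by the inductive hypothesis, Left's strategy of always playing the top blue edge forces a Left win from the original $k$-blue stalk. This completes the induction, and in particular Right cannot use this component to force a Draw.

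The only nontrivial step is the simultaneous-move analysis, namely confirming that when Left plays $B_k$ and Right plays $R_i$ concurrently, the surviving subgraph is exactly the alternating stalk on the first $2i-1$ edges. This is a one-line graph-connectivity check from the ruleset: removing $R_i$ disconnects the entire upper portion from the ground, and the parity of the alternation is preserved because $B_i$ (blue) is the new top edge. No reduction or expected-value computation is needed, which is why this proof can avoid the subtleties flagged in Sections~\ref{disjunctive sum} and~\ref{continued conjunctive sum} about reducing dominated strategies.
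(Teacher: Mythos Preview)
Your proof is correct. The induction is clean, and the simultaneous-move analysis is right: when Left removes $B_k$ and Right removes $R_i$, the cleanup step deletes every edge above $v_{2i-1}$, leaving exactly the alternating blue-ended stalk $B_1,R_1,\ldots,R_{i-1},B_i$ with $i<k$ blue edges.

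Your route differs from the paper's. The paper does not induct; instead it strips off the bottom blue edge $B_1$ and observes that the induced subgraph on $\{v_1,\ldots,v_n\}$ is the colour-reversal of the stalk treated in Proposition~\ref{prop: Right cannot lose}. By that proposition (with roles swapped), Left has a strategy guaranteeing she does not lose the upper subgraph; when play on the upper part terminates, $B_1$ is still present, Right has no move, and Left wins. Interestingly, the concrete strategy both arguments hand to Left is the same---play the highest available blue edge---but the justification is packaged differently: the paper reduces to the previous proposition, while you give a self-contained induction. Your version has the advantage of not depending on Proposition~\ref{prop: Right cannot lose}; the paper's version is terser because it reuses work already done.
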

\begin{proof}
Consider the induced subgraph on the vertices $\{v_{1}, \ldots,  v_{n}\}$. This is the negative of the position described in Proposition~\ref{prop: Right cannot lose}. Left will have one edge remaining after the game on the subgraph has terminated (Right has no move) and thus she willl win this component.
\end{proof}

\begin{proposition}
For \textsc{simultaneous red-blue hackenbush stalks} which starts by alternating and after alternation ends in two red edges, Right cannot lose this component.
\end{proposition}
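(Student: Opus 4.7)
The plan is to reuse the strategy from Proposition~\ref{prop: Right cannot lose}, preceded by a single preparatory turn that disposes of the extra red edge so that play continues in a stalk already governed by that proposition. Label the edges of the stalk from the root upward as $b_1, r_1, b_2, r_2, \ldots, b_k, r_k, r_{k+1}$, where each $b_i$ is a blue edge, each $r_i$ a red edge, $k \geq 1$ counts the blue/red pairs in the alternating initial segment, and $r_{k+1}$ is the extra red edge sitting on top. So the stalk has exactly $2k+1$ edges and ends in the two reds $r_k, r_{k+1}$.

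Right's proposed strategy is to play the topmost available red edge on every turn. On the first turn Right plays $r_{k+1}$. Left's legal moves consist only of blue edges, so Left plays some $b_j$ with $j \in \{1, \ldots, k\}$. The simultaneous resolution removes $b_j$ together with every edge above it, since all of those edges become disconnected from the ground; in particular $r_{k+1}$ is erased by Left's disconnection whenever $j \leq k$, which is always the case. Thus Right's choice of $r_{k+1}$ is harmlessly absorbed regardless of what Left does. If $j = 1$ the stalk is fully destroyed and the resulting position is terminal with neither player having a move, which is a Draw. Otherwise the surviving position is the stalk $b_1, r_1, \ldots, b_{j-1}, r_{j-1}$, a purely alternating blue-red stalk that begins with blue and ends with red; Proposition~\ref{prop: Right cannot lose} then asserts that continuing to play the highest red edge lets Right secure at least a Draw in the remainder.

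Combining the two cases, Right never loses this component, which is exactly what the proposition asserts. The only real obstacle is the bookkeeping of the simultaneous first move, namely checking that Right's removal of $r_{k+1}$ together with Left's removal of $b_j$ (and of the subgraph above $b_j$) always leaves precisely the purely alternating prefix $b_1, r_1, \ldots, b_{j-1}, r_{j-1}$, or the empty stalk when $j = 1$, so that Proposition~\ref{prop: Right cannot lose} applies without modification; this is a careful accounting rather than a genuinely difficult point.
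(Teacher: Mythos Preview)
Your argument is correct and follows essentially the same line as the paper's: Right removes the topmost red on the first turn, Left's removal of some $b_j$ leaves the alternating prefix $b_1,r_1,\ldots,b_{j-1},r_{j-1}$, and Proposition~\ref{prop: Right cannot lose} finishes. For stalks beginning with a blue edge, your proof and the paper's coincide.

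The one point worth noting is scope. Unlike the two preceding propositions, this one does not say ``starting with a blue edge,'' and the paper's proof explicitly treats a second case, labelled (ii), in which the position remaining after the first turn ``starts and ends with red edges.'' That case arises exactly when the original stalk begins with a red edge: the prefix left after Left plays $b_j$ is then $r_1,b_1,\ldots,b_{j-1},r_j$, which is not covered by Proposition~\ref{prop: Right cannot lose} directly. The paper handles it by stripping off the bottom red edge $r_1$, observing that the remainder $b_1,r_2,\ldots,r_j$ is a blue-start, red-end alternating stalk to which Proposition~\ref{prop: Right cannot lose} applies, and noting that Right retains $r_1$ as a final move once that subgame ends. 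Your ``always play the topmost red'' strategy would still succeed in this red-start case, but your reduction to Proposition~\ref{prop: Right cannot lose} as written does not, since the surviving stalk no longer begins with blue. If you intend the proposition to cover only blue-start stalks, your proof is complete; if you read it as the paper apparently does, you are missing the short extra paragraph for case~(ii).
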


\begin{proof}
Right can guarantee a Draw in this component by choosing the second red edge after alternation. Even if Left has chosen an edge above Right's choice on this round, the resulting position is either (i) as in Proposition~\ref{prop: Right cannot lose}, or (ii) starts and ends with red edges. In (i) by Proposition~\ref{prop: Right cannot lose} Right cannot lose. In (ii), consider the induced subgraph on $\left\{v_{1},\ldots,v_{n}\right\}$, this is as in Proposition~\ref{prop: Right cannot lose}. If Right ignores the edge connected to the ground $v_{0}v_{1}$, after simultaneous play ends in the subgraph, Right still has a move in the game (namely $v_{0}v_{1}$), and Left does not, and hence Right cannot lose. 
\end{proof}
\subsubsection{Results for Scoring Play}
\begin{definition}
The \emph{score} of a \textsc{simultaneous red-blue hackenbush} position is defined as the number of blue or red edges remaining after simultaneous play has ended. If there are $n$ blue edges remaining, the score of the position is $n$. If there are $n$ red edges remaining, the score  is $-n$. 
\end{definition}

\begin{lemma}\label{PlayFar}
Consider a \textsc{simultaneous  hackenbush stalk} with alternating blue and red edges. An optimal play has players moving furthest away from the ground. 
\end{lemma}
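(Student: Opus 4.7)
The plan is to argue by strong induction on the length $n$ of the stalk, showing simultaneously that (a) the value is $1$ when $n$ is odd and $0$ when $n$ is even, and (b) the strategy profile in which each player cuts the topmost edge of their own color realizes this value. Let $S_n$ denote the optimal score of the alternating stalk of size $n$ starting with a blue edge at the ground; the red-rooted case is symmetric.

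For the base cases $n\in\{0,1,2\}$ direct inspection gives $S_0=0$, $S_1=1$ (Right has no move, so the position is terminal with one blue edge remaining), and $S_2=0$ (the unique simultaneous round leaves the empty stalk). For $n\ge 3$, parametrize Left's pure strategies by the position $2i-1$ of the blue edge she cuts and Right's by the position $2j$ of the red edge he cuts. The key combinatorial observation is that after one simultaneous round the lower of the two cuts disconnects every edge above it (including the higher cut's edge), so the residual position is precisely the alternating stalk on positions $1,\ldots,\min(2i-1,2j)-1$, which once again starts with a blue edge at the ground and is itself alternating.

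Applying the inductive hypothesis to the residual stalk, the $(i,j)$ entry of the payoff matrix equals $S_{2i-2}=0$ whenever Left cuts lower ($2i-1<2j$) and $S_{2j-1}=1$ whenever Right cuts lower. Hence the full payoff matrix is a $0/1$ matrix determined entirely by which player's cut is higher: Left receives $1$ exactly when her edge sits above Right's. Since Left maximizes and Right minimizes, each player's best response is to cut as high as possible, and the topmost-vs-topmost profile is a pure Nash equilibrium. The value achieved is $S_{n-2}$ — namely the residual stalk obtained after the two topmost edges (which occupy adjacent positions in the stalk) are removed — and by induction this equals $n \bmod 2$, completing the proof.

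The main obstacle I expect is bookkeeping: verifying that the topmost-vs-topmost round really does leave an alternating stalk of length exactly $n-2$, rather than something shorter because of an unexpected disconnection. This reduces to the elementary fact that in an alternating stalk the top blue and top red edges are adjacent (their positions differ by one), so cutting the lower of the two only disconnects the upper one, which is being removed anyway; no edge below them is affected. Once this adjacency is pinned down, the induction closes cleanly and the statement that optimal play moves as far from the ground as possible follows.
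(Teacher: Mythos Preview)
Your proof is correct and takes essentially the same approach as the paper: both arguments compute the payoff matrix entry as $0$ or $1$ according to whose cut is lower (via the residual sub-stalk below the lower cut and its parity), and both conclude by weak domination that each player's topmost move is optimal. The paper presents this by splitting into the two parity cases, displaying the resulting $0/1$ matrix explicitly, and reading off the dominant row or column, whereas you frame the same computation as a strong induction on $n$ with the observation that topmost-vs-topmost leaves a stalk of length $n-2$; the substance is identical.
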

\begin{proof}
We prove this claim for alternating blue and red edges, starting with a blue edge. Symmetric proofs hold true if the stalk started with a red edge. There are two cases to consider: 1) ending with a blue edge; 2) ending with a red edge. Label the edges $l_{1}$, $\ldots$, $l_{n+1}$ for Left's options which $l_{1}$ being the edge closest to the ground, and $r_{1}$, $\ldots$, $r_{n}$ for Right's options. 
In both cases, consider their pure strategies as the labels of the following matrix rows and columns respectively.  

\underline{Case 1}:
 Applying the simultaneous moves recursively, the final matrix is the following $(n +1) \times n$ matrix:

\[
M=
 \begin{bmatrix}
   $0$&$0$&$0$&$\ldots$&$0$\\
   $1$&$0$&$0$&$\ldots$&$0$\\
   $1$&$1$&$0$&$\ldots$&$0$\\
   $\vdots$&&&&\\
   $1$&$1$&$1$&$\ldots$ &$1$
 \end{bmatrix}
\]

All pure strategies for Left are dominated by the final row ($l_{n+1}$) and hence the score of the game is $1$ and thus a Left win. 
 
\underline{Case 2}:
Applying the simultaneous moves recursively, the final matrix will be the following $n \times n$ matrix: 
\[
M=
 \begin{bmatrix}
   $0$&$0$&$0$&$\ldots$&$0$\\
   $1$&$0$&$0$&$\ldots$&$0$\\
   $1$&$1$&$0$&$\ldots$&$0$\\
   $\vdots$&&&&\\
   $1$&$1$&$1$&$\ldots$ &$0$
 \end{bmatrix}
\]
All pure strategies for Right  are dominated by the final column ($r_{n}$) and hence the score of the game is $0$ and thus is a Draw.
\qed
\end{proof}

Note: Lemma~\ref{PlayFar} does not necessarily hold in sums, as demonstrated in Example~\ref{example: win lose}. 

\begin{theorem}
The score of a \textsc{simultaneous red-blue hackenbush stalk} is the number, ${n}$, of blue (or red, respectively) edges before the first alternation between red and blue edges. If the alternation begins and ends with the same colour, then the score is $n$ (or $-n$ respectively). If the alternation begins and ends with different colours, then the score is ${ n-1}$ (or ${-n+1}$ respectively).
\end{theorem}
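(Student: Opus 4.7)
My plan is to induct on the length $\ell$ of the alternating portion of the stalk. By the blue-red symmetry of \textsc{simultaneous hackenbush} (swapping colours swaps the two players and negates the score), it suffices to handle stalks whose initial monochromatic run at the ground is blue. Let $P(n,\ell)$ denote the stalk whose edges, read from the ground up, are $n$ blue edges $e_1,\dots,e_n$ followed by an alternating segment $e_{n+1},\dots,e_{n+\ell}$ beginning with red. The theorem, restricted to this side of the symmetry, becomes the statement that $\mathrm{score}(P(n,\ell))=n$ when $\ell$ is even (so the top edge is blue, matching the bottom) and $\mathrm{score}(P(n,\ell))=n-1$ when $\ell$ is odd.

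The base cases are immediate: $P(n,0)=B^n$ is terminal for Right with score $n$, and in $P(n,1)=B^nR$ the only Right move is $e_{n+1}$, Left's dominant move (by a Lemma~\ref{PlayFar}-style argument) is $e_n$, and the chop leaves $B^{n-1}$ with score $n-1$.

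For the inductive step the core technical claim is that Left's topmost blue $e_{j_{\max}}$ weakly dominates every other Left pure strategy, and Right's topmost red is a best response to it. When Left plays $e_j$ and Right plays $e_m$, both edges and everything strictly above $\min(j,m)$ disappear, so the residual is $B^{h-1}$ when $h=\min(j,m)\le n$ and $P(n,h-n-1)$ when $h>n$. A short case split on the relative positions of $j$, $j_{\max}$, and $m$, combined with the induction hypothesis to evaluate the residual, confirms the dominance. With that in hand, whether $\ell$ is even (so $j_{\max}=n+\ell$ and Right's top red is $e_{n+\ell-1}$) or $\ell$ is odd (so $j_{\max}=n+\ell-1$ and Right's top red is $e_{n+\ell}$), one round of optimal play chops the stalk at height $n+\ell-1$ and leaves $P(n,\ell-2)$. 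Since $\ell-2$ has the same parity as $\ell$, the induction hypothesis supplies the claimed score.

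The hard part will be the dominance step, because $\mathrm{score}(P(n,\ell'))$ is not monotone in $\ell'$: it oscillates between $n$ and $n-1$ as $\ell'$ changes parity, so one cannot simply argue ``higher chop $\Rightarrow$ more blue kept $\Rightarrow$ larger score''. The saving observation is a parity alignment: every Right move sits at an odd offset $m-n$ from the base, so whenever Right's move determines the chop height the residual offset $m-n-1$ is even and the favourable score $n$ is attained. This parity bookkeeping is exactly what forces Left's top-blue strategy to be uniformly dominant, and what drives the inductive reduction $P(n,\ell)\to P(n,\ell-2)$.
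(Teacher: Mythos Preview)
Your proof is correct and, in fact, cleaner than the paper's. Both arguments rest on the same two ingredients---induction and the ``play the topmost edge'' dominance idea of Lemma~\ref{PlayFar}---but you organise them differently. You induct directly on the length $\ell$ of the alternating segment, establish base cases $\ell=0,1$, and show that one round of optimal play reduces $P(n,\ell)$ to $P(n,\ell-2)$; your parity observation (every Right edge sits at an odd offset from the blue base, so whenever Right's edge determines the chop the residual offset is even and the score is $n$) is exactly what drives the dominance of Left's top blue. The paper instead sets up a more elaborate inductive frame, considering a stalk of the form ``$n$ blue edges, then an alternating block ending in two blues, then an arbitrary tail $\alpha$'', and splits into six cases according to which region (the tail $\alpha$, the alternating block, or the initial blue run) each player moves in, invoking Lemma~\ref{PlayFar} inside the alternating block. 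Your version buys transparency: the induction hypothesis is stated explicitly, the dominance claim is checked entrywise, and the delicate non-monotonicity you flag (the score oscillates between $n$ and $n-1$) is handled head-on rather than absorbed into the case analysis. The paper's version gestures at a broader class of stalks via the tail $\alpha$, but the induction hypothesis it is actually using is left somewhat implicit.
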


\begin{proof}
We show the proof for the stalk of score $n$ and $n-1$. It is a similar proof for $-n$ and $-n+1$. 

Consider a stalk where there are $n$ blue edges followed by a series of alternating red and blue edges, ending in two blue edges, followed by a string of $\alpha$ edges. There are six cases to consider:
\begin{itemize}
\renewcommand{\labelitemi}{{\bf $\circ$}}
\item Case 1: Both players move in $\alpha$. By induction, this game has value $n$.
\item Case 2: Left moves in $\alpha$, Right moves in the first alternating part. We are left with the position, of $n$ blue edges, and an alternating red-blue stalk above that, ending in blue. By Lemma~\ref{PlayFar}, both players will play their furthest edges and hence, in each turn the top two edges will be chosen. Right will run out of moves and Left will have $n$ edges remaining. 
\item Case 3: Right moves in $\alpha$ and Left moves in the first alternating part. The remaining stalk will have $n$ blue edges followed by alternating red-blue stalk above, ending in red. Again by Lemma~\ref{PlayFar}, both players will choose the furthest edges from the ground. This will result in $n-1$ blue edges at the end of simultaneous game play and thus is a dominated option (Case 1 and 2 are better options for Left). 
\item Case 4: Both players move in the first alternating part. By Lemma ~\ref{PlayFar}, both players will play at the top of this section of the stalk. Hence  We are left with the position, of $n$ blue edges, and an alternating red-blue stalk above that, ending in blue. Thus this falls into Case 2, and ends with a score of  $n$. 
\item Case 5 and 6: Left moving in the all blue string while Right moves in either $\alpha$ or the first alternating part. These options are dominated because it will result in a value less than $n$. 
\end{itemize}
\qed
\end{proof}

Under sequential play, the Conway values of alternating \textsc{red-blue hackenbush stalks} (starting with a blue edge) are approaching $2/3$ as the height of the stalk approaches infinity. Alternating \textsc{red-blue hackenbush stalks} (starting with a blue edge) with value less than $2/3$ are Draws in \textsc{simultaneous hackenbush}, while positions with values greater than $2/3$ are Left wins in \textsc{simultaneous hackenbush}. This fact, and Lemma~\ref{PlayFar}, lead us to the following conjecture.

\begin{conjecture}
Let $G$ be a \textsc{hackenbush} tree. If the CGT value of $G$ is greater than $2/3$, then $o_{S}(G)$ is a Left win.
\end{conjecture}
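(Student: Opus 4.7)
The plan is to induct on the number of edges of $G$, using the stalk case as a base and leveraging the recursive structure of a tree for the inductive step. For a single edge, $v_A(G)>2/3$ forces the edge to be blue, so Right has no move and Left wins trivially. For a stalk (a tree with only one branch off the root), the theorem immediately preceding the conjecture gives the simultaneous-play score in terms of the initial monochromatic run and the alternating tail; combining this with the CGT sign-expansion value of a stalk, one checks that the simultaneous-score is strictly positive precisely when $v_A>2/3$, with equality approached only in the limit of infinite alternation. This matches the conjecture for every stalk and anchors the induction.

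For the inductive step, suppose $G$ has principal branches $B_1,\ldots,B_k$ off the root, so that by independence of branches $v_A(G)=\sum_i v_A(B_i)$. I would propose that Left play an edge $e$ chosen to be a deepest blue edge inside a branch of maximal value, in the spirit of Lemma~\ref{PlayFar}. Given Right's simultaneous response $f$, the analysis splits into two cases. If $e$ and $f$ lie in different branches, then the effect of the simultaneous move is exactly that of playing $e$ followed by $f$ in alternating play, so the drop in $v_A$ is governed by the usual CGT bounds on a Left--Right pair. If they lie in the same branch and $e$ is below $f$, then Left's removal already disconnects $f$ and the resulting position is simply $G^e$. In either case the hope is to show that the resulting $G'$ is either terminal with Left holding the last move, or still satisfies $v_A(G')>2/3$, so that the inductive hypothesis applies.

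The hard part will be pinning down the quantitative bound on how much $v_A$ can drop in a single simultaneous round. Under alternating play one appeals to temperature and mean-value theory, but under simultaneous play Left cannot condition her move on $f$, and examples elsewhere in the paper (notably Example~\ref{ex:dsnp} and the $G\bigtriangledown H$ discussion with dominated strategies) show that naive expected-value arguments can fail across sums. The expected crux is to show that when $v_A(G)>2/3$, the worst-case value across all Right responses either remains above $2/3$ or corresponds to an immediate Left win by the extended normal play convention. This seems to require a careful comparison between $G$ and the alternating stalk realising its colon-principle equivalent value, exploiting the fact that $2/3$ is exactly the limiting value of those stalks and so lies on the simultaneous-play boundary between Draw and Left win. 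Making this comparison rigorous without a full compositional theory of simultaneous \textsc{hackenbush} is, I believe, precisely where the conjecture has remained open.
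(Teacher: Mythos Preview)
The statement you are attempting to prove is labeled a \emph{Conjecture} in the paper; the authors give no proof of it, only the motivating observations about alternating stalks approaching $2/3$ and Lemma~\ref{PlayFar}. There is therefore nothing in the paper to compare your proposal against.

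As for the proposal itself, you correctly identify its own gap: the inductive step requires controlling the drop in $v_A$ over one simultaneous round, and you have no mechanism for this. Concretely, your case split for the inductive step already fails. When $e$ and $f$ lie in the same branch but $f$ is \emph{below} $e$ (Right cuts lower than Left), the resulting position is $G^f$ with Left's edge $e$ gone as collateral, not $G^e$; you only handled the sub-case where Left cuts lower. More seriously, even when $e$ and $f$ are in different branches, the assertion that ``the drop in $v_A$ is governed by the usual CGT bounds on a Left--Right pair'' does not give what you need: a Left move followed by a Right move can certainly take a position of value just above $2/3$ to one of value below $2/3$ (for instance, a blue--red--blue stalk has value $3/4$, and after Left takes the top blue and Right the red, the remaining single blue edge has value $1$---fine here, but perturb with a small negative branch and the margin disappears). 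Without a quantitative lemma tying the simultaneous-round value drop to the $2/3$ threshold, the induction cannot close, and you have not supplied one. Your final paragraph is an honest acknowledgement that the argument is a plan rather than a proof; the conjecture remains open.
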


Similar results hold for Right if the roles of red and blue edges are interchanged.

\begin{lemma}
A \textsc{simultaneous red-blue hackenbush cordon} of height $n$ with all stalk edges blue and $a$ blue leaves and $b$ red leaves has score $n+a-b>0$.
\end{lemma}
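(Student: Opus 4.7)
The plan is to exhibit matching strategies for the two players and to track the potential
\[
\phi(G) := \#(\text{blue edges in }G) - \#(\text{red edges in }G),
\]
which starts at $n+a-b$ and, at any terminal position (where by definition one colour is exhausted), coincides with the final score. Positivity is immediate bookkeeping: the attachment sequence $\{a(1),\ldots,a(k)\}$ is strictly increasing in $\{1,\ldots,n-1\}$, so $a+b=k\leq n-1$, whence $n+a-b\geq 2a+1\geq 1$.

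Left's proposed strategy is: remove a blue leaf whenever at least one remains; otherwise remove the current top stalk edge $v_tv_{t-1}$, where $v_t$ is the top of the current (possibly shortened) stalk. Right's proposed strategy is: remove the red leaf attached to the highest-indexed vertex. Two structural facts make this clean: each attachment vertex carries at most one leaf (strict increase of the $a(j)$), and originally $v_n$ carries none. Consequently, when Left plays the top stalk edge the detached vertex $v_t$ carries at most a single leaf, and that leaf, if present, must be red (a blue leaf would have been Left's move instead).

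I would then verify that $\phi$ is invariant round by round under these strategies, splitting into three exhaustive cases. (a) Left removes a blue leaf and Right removes a red leaf at a different vertex: no cascade, one blue and one red disappear, $\Delta\phi=0$. (b) Left removes the top stalk edge while $v_t$ is leafless: $v_t$ alone detaches, Right removes one red elsewhere, $\Delta\phi=0$. (c) Left removes the top stalk edge while $v_t$ carries its single (necessarily red) leaf: that leaf is the highest remaining red, hence exactly Right's target; Left's move and Right's move both point at that one leaf, so it is removed exactly once, giving one blue and one red removed and $\Delta\phi=0$. Since every round strictly reduces the edge count the game terminates, the terminal $\phi$ equals the score, and thus Left's strategy guarantees at least $n+a-b$ while Right's strategy limits the score to at most that same value.

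The remaining step is checking that no deviation beats these strategies. Against Left's strategy, a Right deviation in case (c) (skipping the highest red for a lower one) makes Left's cascade additionally remove the higher red, for $\Delta\phi=+1$; so Right cannot force the score below $n+a-b$. Against Right's strategy, a Left deviation to a lower stalk edge $v_iv_{i-1}$ with $i<t$ removes $1+(t-i)$ stalk edges plus the $\alpha$ blue leaves above $v_{i-1}$, while cascading at most the $\beta$ red leaves above $v_{i-1}$; because attachments are distinct and $v_n$ has none, $\alpha+\beta\leq t-i+1$, and a quick calculation gives $\beta-\alpha\leq t-i+1$, hence $\Delta\phi\leq 0$. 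The main obstacle in writing this up carefully is the accounting in case (c) — ensuring Right's target and the cascaded leaf at $v_t$ are not double-counted — together with the cleanest statement of the inequality $\beta-\alpha\leq t-i+1$ that rules out any beneficial Left deviation to a lower stalk edge.
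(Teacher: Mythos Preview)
Your proposal is correct and uses the same pair of strategies as the paper (Left: blue leaf if any, else the top stalk edge; Right: the highest remaining red leaf), but your analysis is appreciably more careful than the paper's. The paper simply observes that Right cannot interfere with any blue edge, that Right should play high to avoid losing options to a Left cascade, and then counts moves: Left has $n+a$, Right has $b$, so the score is $n+a-b$. Your potential $\phi$ repackages this move-count as an invariant and makes the deviation arguments explicit; you also supply the positivity $n+a-b>0$ via $k=a+b\le n-1$, which the paper asserts but does not justify.

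One small point to tighten in your Left-deviation step: the inequality $\beta-\alpha\le t-i+1$ by itself only gives $\Delta\phi\le 0$ when Right's highest red leaf lies at height $\ge i$ (so it is among the cascaded edges and $\Delta\phi=\beta-(t-i+1)-\alpha$). If Right's target lies below $v_i$, then by ``highest'' there are no red leaves at height $\ge i$, hence $\beta=0$ and $\Delta\phi=1-(t-i+1)-\alpha\le 0$. Stating this dichotomy explicitly (and noting that the cases $i=t$ and ``Left plays a stalk edge while blue leaves remain'' fall under the same computation) removes the obstacle you flagged.
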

Note: Interchanging the roles of red and blue edges, the score of the game is $-n-a+b<0$.
\begin{proof}

On a stalk, by Lemma~\ref{PlayFar}, we know players will play furthest away from the ground. Consider now a cordon, where all stalk edges are blue and there are $a$ blue leaves and $b$ red leaves. First let's consider Right's strategy. He only has leaves to play. If he chooses a leaf closer to the ground, and Left cuts a stalk edge below some red leaves, Right loses options. Hence Right will play leaves furthers away from the ground, to have minimal interference with Left. Left on the other hand, can either match Right's option by taking the stalk edge corresponding to the edge incident to a red leaf, or she could take a blue leaf. She is guaranteed all moves (since Right cannot interfere with any of Left's options), hence it is in her best interest to play leaves one at a time (including the nth stalk edge), and thus Left has a total of $n+a$ moves, and, if Left plays optimally, Right will have $b$ moves. Hence the value of the cordon is the number of extra moves Left has when Right runs out of moves, which is precisely $n+a-b$.  
\qed
\end{proof}
%%%%%%%%%%%%%%%%%%%%%%%%%
%%%%%%%%%%%%%%%%%%%%%%%%
%%%%%%%%%%%%%%%%%%%%%%%%%

\subsection{\textsc{subtraction squares} $SQ(\{a\}, \{b\})$}\label{sec:ss}

%%%%%%%%%%%%%%%%%%%%%%%%%
%%%%%%%%%%%%%%%%%%%%%%%%
%%%%%%%%%%%%%%%%%%%%%%%%%
%\rjn{Outlined at the moment but will not take much to complete.}
\textsc{subtraction squares} $SQ(\{a\}, \{b\})$ is a special case of $SQ(S_{L}, S_{R})$ introduced in Section~\ref{ss:eando}. In this section, Left and Right each respectively only have one element in their subtraction set, $S_{L} = \{a\}$ and $S_{R}= \{b\}$. We use this special case to demonstrate a new concept we call the index.

\begin{definition} Let $G$ be a game and let: (i) $^{\ell}G$ be the game $G$ where every  terminal position
of value $-1$ (that is a Right win) is replaced by $0$; and (ii) $G^{r}$ be the game $G$ where every  terminal position
of value $1$ (that is a Left win) is replaced by $0$. Let $\ell_G=Ex(^{\ell}G)$ and $r_G=Ex(G^{r})$.
The \textit{index} of $G$ is $I(G) = \left[\ell_G, r_G\right]$.
 \end{definition}

Note that $\ell_G$ and $r_G$ are the probabilities that Left wins and Right wins respectively.
Intuitively, Left would prefer $G$ to $H$ in a sum if her chances of winning $G$ are at least as good as winning 
$H$ and if Right's chance were less. \\

\begin{theorem}
In the game $SQ(\{a\}, \{b\})$ under extended normal play, the expected value of $\underline{n}$ is given by
 \[Ex(\underline{n})=\begin{cases}
0, \text{if $n<a$;}\\
1, \text{if $a \leq n <b$;}\\
(Ex(\underline{n-b}) + Ex(\underline{n-b-a}))/2, \text{if $a+b \leq n$.}\\
 \end{cases}
 \]
\end{theorem}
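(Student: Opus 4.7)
The proof will be by strong induction on $n$, splitting into the three cases given in the recurrence. The key ingredients are (i) the rules governing terminal positions and the extended normal play winning convention for the base cases, (ii) a direct computation of the simultaneous-play matrix $M(\underline{n})$ from the ruleset given in Section~\ref{ss:eando}, and (iii) a computation of the value of the resulting $2\times 2$ expected-value matrix. Implicit in the statement is $a\le b$ (otherwise Case~2 is vacuous and the roles are reversed), and I will assume this throughout.

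Case~1 ($n<a$). Since $n<a\le b$, neither player has a legal move, so $^{\mathcal{L}}\underline{n}=\underline{n}^{\mathcal{R}}={}^{\mathcal{L}}\underline{n}^{\mathcal{R}}=\emptyset$. By the extended normal play convention, $\underline{n}$ is a Draw, so $Ex(\underline{n})=0$. Case~2 ($a\le n<b$). Here Left has moves (she can remove $a$ from either end) but Right has none, and because Right has no move there are no simultaneous options either. Hence $\underline{n}$ is terminal with $^{\mathcal{L}}\underline{n}\neq\emptyset$ and $\underline{n}^{\mathcal{R}}=\emptyset={}^{\mathcal{L}}\underline{n}^{\mathcal{R}}$, which is a Left win under extended normal play; thus $Ex(\underline{n})=1$. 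Both cases use only the definitions and no induction.

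Case~3 ($a+b\le n$) is the real content. I will apply the simultaneous rule in Section~\ref{ss:eando} to each of the four pure-strategy pairs $(l,l),(l,r),(r,l),(r,r)$. If both play the same side, $\max\{a,b\}=b$ squares are removed, yielding $\underline{n-b}$. If they play opposite sides, then since $a+b\le n$ the fallback clause does not apply, so the result is $\underline{n-a-b}$. Because the strip is symmetric under reflection, the matrix is
\[
M(\underline{n})=\begin{pmatrix}\underline{n-b}&\underline{n-a-b}\\ \underline{n-a-b}&\underline{n-b}\end{pmatrix},
\]
and writing $x=Ex(\underline{n-b})$ and $y=Ex(\underline{n-a-b})$ (both defined by the induction hypothesis, as $n-b<n$ and $n-a-b\ge 0$), the expected-value matrix $M'(\underline{n})$ is $\bigl(\begin{smallmatrix}x&y\\ y&x\end{smallmatrix}\bigr)$.

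The final step, and the main obstacle, is to show that the value of this $2\times 2$ zero-sum game is $(x+y)/2$. Neither row (resp.\ column) weakly dominates the other unless $x=y$, so in general one must exhibit an equilibrium mixed strategy. The plan is to verify directly that Left playing $(\tfrac12,\tfrac12)$ guarantees expected payoff $(x+y)/2$ regardless of Right's choice, and symmetrically for Right; by the minimax theorem (e.g.\ \cite{Barron2008}) this pair is optimal and the value is $(x+y)/2$. Substituting back gives the stated recurrence. One small caveat worth flagging in the write-up is that the statement does not cover the window $b\le n<a+b$; the same matrix analysis shows $M(\underline{n})$ is identically $\underline{0}$ in that range (both the same-side and the opposite-side clauses send play to $\underline{0}$ or to a position of size less than $a$), so $Ex(\underline{n})=0$ there, and this can be noted as a remark.
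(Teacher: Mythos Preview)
Your proof is correct and follows essentially the same approach as the paper: the same three cases, the same $2\times 2$ matrix coming from same-side versus opposite-side play, and the same conclusion that the value is the average of the two entries. You are simply more explicit than the paper about justifying the $(\tfrac12,\tfrac12)$ equilibrium and about the uncovered window $b\le n<a+b$ (which the paper's proof handles implicitly via $\max\{0,n-a-b\}$ but the theorem statement omits).
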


\begin{proof}
If $n<a$ then neither player has a move and the game is a draw. If $a\leq n < b$ then only Left has a move and the game is a Left win. 
Suppose $n \geq b$. With equal probability the players play on the same side, leaving $n-b$ squares, or play on opposite sides leaving max$\{0,n-a-b\}$ giving $Ex(\underline{n}) =( Ex(\underline{n-b}) +Ex(\underline{n-b-a}))/2$.
\qed
\end{proof}

For given $a$, $b$, solving the game $SQ(\{a\},\{b\})$ means solving the recurrence  $2Ex(\underline{n}) = Ex(\underline{n-b}) +Ex(\underline{n-b-a})$ with the initial conditions. For example, in $SQ(\{1\},\{2\})$,

\begin{equation*}
Ex(\underline{n}) = \frac{1}{5}\left(2-\left(1+2i\right)\left(-\frac{1}{2} + \frac{i}{2}\right)^{n} - \left(1-2i\right)\left(-\frac{1}{2}-\frac{i}{2}\right)^{n}\right).
\end{equation*}

Since all the terms which are raised to the power $n$ are less than $1$ in modulus, then $lim_{n\rightarrow \infty} Ex(\underline{n})=2/5$. 

\begin{corollary}
For the subtraction game SQ$(\{a\},\{b\})$,  where $a < b$, $l_{\underline{n}}= Ex(\underline{n})$.
\end{corollary}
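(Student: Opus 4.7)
The plan is to show that in $SQ(\{a\},\{b\})$ with $a<b$, there simply are no terminal positions of value $-1$ (Right wins), so the operation $G \mapsto {}^{\ell}G$ does nothing, which instantly yields $\ell_{\underline{n}} = Ex({}^{\ell}\underline{n}) = Ex(\underline{n})$.

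First I would unpack when $\underline{m}$ is a terminal position during play of $SQ(\{a\},\{b\})$. By definition $\underline{m}$ is terminal iff ${}^{\mathcal{L}}\underline{m}^{\mathcal{R}} = \emptyset$, i.e.\ at least one player lacks a move. Since Left's only move removes $a$ squares and Right's only move removes $b$ squares, Left has a move iff $m\geq a$ and Right has a move iff $m\geq b$. This gives three cases:
\[
\underline{m} \text{ is } \begin{cases} \text{a Draw (value }0\text{)}, & m<a,\\ \text{a Left win (value }1\text{)}, & a\leq m<b,\\ \text{non-terminal}, & m\geq b.\end{cases}
\]

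Now I would invoke the hypothesis $a<b$: this is precisely what guarantees that the middle case is non-empty and, more importantly, that the fourth possibility (Right has a move but Left does not) never arises. Indeed, if Right can move then $m\geq b>a$, so Left can also move, and hence $\underline{m}$ is not terminal. Consequently, no terminal position of $SQ(\{a\},\{b\})$ has value $-1$.

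The final step is immediate. Since ${}^{\ell}G$ is obtained from $G$ by replacing terminal positions of value $-1$ by $0$, and no such positions occur anywhere in the game tree of $\underline{n}$, the modified game ${}^{\ell}\underline{n}$ is identical to $\underline{n}$. Therefore
\[
\ell_{\underline{n}} \;=\; Ex({}^{\ell}\underline{n}) \;=\; Ex(\underline{n}),
\]
as claimed. There is no real obstacle here; the only point requiring care is to verify that the absence of Right-win terminals propagates throughout the recursion used to define $Ex$, which it trivially does because the only terminals encountered in the recursive unfolding of $\underline{n}$ are the positions $\underline{m}$ with $m<b$ analyzed above.
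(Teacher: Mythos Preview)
Your proof is correct and rests on the same observation as the paper's: since $a<b$, Right can never reach a terminal position where he alone has a move, so no terminal follower of $\underline{n}$ has value $-1$. The paper states this in one line (``Since Right cannot win, $r_{\underline{n}}=0$''), from which $\ell_{\underline{n}}=Ex(\underline{n})$ follows immediately; your version simply spells out the case analysis on $m$ more explicitly.
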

 \begin{proof}
Since Right cannot win, $r_{\underline{n}} = 0$. 
\qed
\end{proof}

Within the context of \textsc{subtraction squares} $SQ(\{a\}, \{b\})$, the following theory holds. 

\begin{theorem}\label{thm: index equality}
For simultaneous games $G$ and $H$ played with the continued 
conjunctive sum then 

(1)  $G=H$ iff $I(G)= I(H)$; and

 (2) if $G\geq H$ then $\ell_G\geq \ell_H$ and $r_G\leq r_H$.
\end{theorem}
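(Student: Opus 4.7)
The plan is to reduce both parts of the theorem to a single multiplicative identity for the index under the continued conjunctive sum, and then to read off each statement by plugging two simple test games into the definition of equality (or inequality).

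The heart of the argument is the identity
\[
\ell_{G \bigtriangledown X} \;=\; \ell_G\,\ell_X, \qquad r_{G \bigtriangledown X} \;=\; r_G\,r_X.
\]
Under the continued conjunctive sum the two components play in parallel with no coupling between their move sets, so every (behavioural) strategy in $G \bigtriangledown X$ decomposes as a pair of independent strategies, and the state in which each component freezes is controlled entirely by that component's strategy pair. Under extended normal play the sum is a Left win iff Left has an option and Right does not in every component; equivalently, iff both component terminals are Left-win positions, and symmetrically for Right. Hence the leaf values of $\,^{\ell}(G \bigtriangledown X)$ are exactly the products of those of $\,^{\ell}G$ and $\,^{\ell}X$, each lying in $\{0,1\}$. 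An induction on the combined game-tree depth then reduces the claim to the following matrix-game fact: if $A,B$ have nonnegative entries and $C_{(i,k),(j,l)} = A_{ij}B_{kl}$, then $v(C)=v(A)v(B)$. This holds because an optimal product strategy for Left guarantees $v(A)v(B)$, using $\min_{(j,l)} a_j b_l = (\min_j a_j)(\min_l b_l)$ for nonnegative vectors $a,b$, and symmetrically Right's optimal product strategy caps Left's value at $v(A)v(B)$. The argument for $r$ is identical with the $r$-auxiliary game in place of the $\ell$-auxiliary game.

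Granted multiplicativity, part (1) follows quickly. If $I(G)=I(H)$ then multiplicativity gives $I(G \bigtriangledown X)=I(H \bigtriangledown X)$ for every $X$, so $G=H$. Conversely, suppose $G=H$ and take $X$ to be a terminal Left-win position (so $\ell_X=1$, $r_X=0$); the identity $I(G \bigtriangledown X)=I(H \bigtriangledown X)$ then collapses to $\ell_G=\ell_H$, and a terminal Right-win test yields $r_G=r_H$. Part (2) uses the same two probes: on a Left-win $X$, $G\geq H$ becomes $\ell_G\geq \ell_H$; on a Right-win $X$, it becomes $r_G\leq r_H$ (the order on outcome distributions preferring higher $\ell$ and lower $r$).

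The principal technical obstacle is the multiplicativity step: a priori Left could use a mixed strategy that correlates her choices in $G$ with those in $X$, and one must rule out that such correlation outperforms product play. The key point is that the leaf values of the auxiliary games are nonnegative numbers in $[0,1]$, so the combined matrix at each step is an outer product of nonnegative matrices, and product strategies already attain the optimum for both players.
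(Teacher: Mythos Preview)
Your approach is essentially the paper's. Both arguments probe with the two terminal test games---a Left-win terminal and a Right-win terminal, which the paper writes as $X=1$ and $X=-1$---to read off $\ell_G,\ell_H$ and $r_G,r_H$ from the hypothesis $G=H$ (respectively the inequalities from $G\ge H$). The only structural difference is ordering: you establish the multiplicativity identity $I(G\bigtriangledown X)=[\ell_G\ell_X,\,r_Gr_X]$ first and use it to supply the converse direction of (1), whereas the paper records exactly this identity as a separate Lemma immediately \emph{after} the theorem, with the one-line justification ``play in $G$ and play in $H$ are independent.'' Your more careful treatment of multiplicativity---arguing that correlated mixed strategies cannot outperform product strategies because at each stage the auxiliary payoff matrix is a Kronecker product of nonnegative matrices, so $v(A\otimes B)=v(A)v(B)$---is a genuine addition over the paper's terse independence remark, and it makes the backward implication in (1) explicit where the paper leaves it to the reader. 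One small quibble: your opening line that ``every (behavioural) strategy in $G\bigtriangledown X$ decomposes as a pair of independent strategies'' overstates things (correlated strategies exist), but you correctly patch this in your final paragraph by showing product strategies are already optimal for both players.
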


\begin{proof} We only prove (2) since (1) is similar.

 Suppose that, for all games $X$, $I(G\bigtriangledown X)\geq I(H\bigtriangledown X)$. Setting $X=\emph{1}$,
 gives $I(G\bigtriangledown \emph{1}) =[\ell_G,0]$ and $I(H\bigtriangledown \emph{1}) =[\ell_H,0]$. It follows that
 $\ell_G\geq \ell_H$. Letting $X=\emph{-1}$ gives $r_G\leq r_H$. 
  \qed
\end{proof}

\begin{lemma}
Let $G$, $H$ be games then

(i)  $0\leq r_G,l_G\leq 1$ and $0\leq \ell_G+r_G\leq 1$;

(ii) $I(G\bigtriangledown  H) = [\ell_G\ell_H,r_Gr_H]$.

\end{lemma}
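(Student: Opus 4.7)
My plan is to handle (i) and (ii) separately: (i) is an exercise in bounding the minimax value of a zero-sum game whose payoffs lie in $\{0,1\}$, while (ii) will rest on the observation that under continued conjunctive sum with extended normal play, the event ``Left wins'' factorises across the two components.

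First I would dispatch (i). Every terminal position of ${}^{\ell}G$ contributes either $0$ or $1$ to the expected value (original Right-wins and Draws both collapse to $0$), so $\ell_G$ is the value of a zero-sum game with payoffs in $[0,1]$ and therefore lies in $[0,1]$; the same reasoning applies to $r_G$, viewed as the probability that Right wins. For the bound $\ell_G+r_G\le 1$, I would use the pointwise inequality at every terminal of $G$: ``Left wins'' and ``Right wins'' are mutually exclusive events, so the indicator of a Left win is pointwise at most $1$ minus the indicator of a Right win. Hence ${}^{\ell}G$ is dominated entry-by-entry by the derived game whose payoff is $1$ minus the Right-winning indicator, and by monotonicity of the minimax operator the latter has value $1-r_G$, giving $\ell_G\le 1-r_G$.

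For (ii) the critical combinatorial point is that in a terminal position of $G\bigtriangledown H$ under extended normal play, Left wins exactly when she has an option in every component while Right has an option in none. Since the sum being terminal forces each component to be terminal, this is equivalent to each of $G$ and $H$ being a Left-win terminal in isolation. At terminals, then, the Left-winning indicator of $G\bigtriangledown H$ is the product of the individual Left-winning indicators. I would then induct on the depth of the combined game tree: at any internal node, a pure strategy of either player is a pair of options, one in each live component, so the payoff matrix for ${}^{\ell}(G\bigtriangledown H)$ at that node has its $((i,k),(j,l))$-entry equal to the product of the corresponding entries of the ${}^{\ell}G$ and ${}^{\ell}H$ matrices by the induction hypothesis. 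This matrix is a Kronecker product, and since its entries are non-negative its value equals the product of the values of the factors, completing the induction and yielding $\ell_{G\bigtriangledown H}=\ell_G\ell_H$. The argument for $r_{G\bigtriangledown H}=r_G r_H$ is word-for-word symmetric.

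The hard part will be pinning down the Kronecker-product step in (ii). Two ingredients need care. First, one must verify that a simultaneous move in $G\bigtriangledown H$ is genuinely a pair of independent options, one in each still-live component, with no cross-coupling imposed by the ruleset; this is what makes the node-matrix a true Kronecker product rather than merely a matrix indexed by pairs. Second, one must either cite or re-prove the identity $\mathrm{val}(A\otimes B)=\mathrm{val}(A)\,\mathrm{val}(B)$ for zero-sum matrix games with non-negative entries, which follows from the computation
\[
X^{T}(A\otimes B)(y\otimes y')=\sum_{i,k}X_{ik}(Ay)_{i}(By')_{k}\le v_{A}v_{B}
\]
when Right plays the product of per-component optimal strategies (each $(Ay)_i\le v_A$ and $(By')_k\le v_B$), together with the symmetric lower bound. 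Both points are routine once written out carefully, so the induction goes through.
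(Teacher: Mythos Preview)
Your argument is correct and follows the same conceptual line as the paper's proof---part (i) rests on $\ell_G$ and $r_G$ being probabilities of mutually exclusive events, and part (ii) rests on the independence of play in the two components under the continued conjunctive sum. The paper's own proof is essentially two sentences that assert these facts; you have supplied the justification that the paper omits.

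Your contribution over the paper is the Kronecker-product step. The paper simply writes ``play in $G$ and play in $H$ are independent, which gives $\ell_{G\bigtriangledown H}=\ell_G\ell_H$''; you make this precise by observing that when both components are live the option matrix of ${}^{\ell}(G\bigtriangledown H)$ is the Kronecker product of the component matrices, and then invoking $\mathrm{val}(A\otimes B)=\mathrm{val}(A)\,\mathrm{val}(B)$ for non-negative payoff matrices. That identity is exactly what is needed to turn the informal ``independence'' into a statement about minimax values, and your sketch of its proof (product strategies on each side, with non-negativity ensuring the entrywise bounds multiply correctly) is sound. Likewise, your treatment of (i) via monotonicity of the recursive value operator applied to the terminal inequality $f_L\le 1-f_R$ is more careful than the paper's bare assertion; note that $\mathrm{val}(1-f_R)=1-r_G$ does require unpacking the sign conventions (so that $r_G$ is genuinely the max--min of Right's winning indicator), which you should make explicit. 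One small point to tidy: in the induction for (ii) you should separately handle the boundary case where exactly one component has terminated, since the option matrix is then not a Kronecker product but simply the live component's matrix scaled by the indicator of a Left win in the dead one; this is straightforward but worth stating.
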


\begin{proof} For any game, $G$, since $\ell_G$ and $r_G$ are probabilities of mutually exclusive outcomes
 then $0\leq r_G,l_G\leq 1$  and $0\leq \ell_G+r_G\leq 1$.

Play in $G$ and play in $H$ are independent which gives $\ell_{G\bigtriangledown  H} = \ell_G\ell_H$.\qed
\end{proof}

In $SQ(\{a\},\{b\})$, both $\ell_{\underline{n}}$ and $r_{\underline{n}}$ are numbers. 
Can this be extended to larger subtraction sets? The immediate answer is no. The situation is more complicated since
$\ell_{\underline{n}}$ and $r_{\underline{n}}$ become functions. Consider
$SQ(\{1,4\},\{2\})$. The index for $\underline{4}$ is $[p,p]$, $0\leq p\leq 1/2$, moreover, the value of $p$ 
is determined by Left.

\section{Conclusions}

We extend combinatorial game theory with alternating play to allow simultaneous moves and develop the basic concepts
required to analyze these games. 
We then introduced and investigated three combinations of simultaneous games and two winning conventions. This 
included a strategic concept of equality and inequality. Since we are in the realm of two-player,  zero-sum games, 
dominated strategies can be eliminated without changing the expected value. In the disjunctive sum, under alternating play, 
the outcome of $G+H$ can be found by first reducing $G$ and $H$ then considering the sum of the resulting games.
However, for simultaneous play, we have shown that this only holds in the continued conjunctive sum under the scoring winning convention.

\begin{question}What sum, $\odot$, and reductions, can be applied to $G$ and $H$, giving $G'$ and $H'$ respectively,
 so that $Ex(G\odot H) =Ex(G'\odot H')$?
 \end{question}

 In the case studies, we examined a weaker form of equality and inequality, where positions from the same game 
 (for example) are compared. We formalize that approach. 
 Given a sum $\odot$, let an $\odot$-\textit{system}, $\mathbb{S}_{\odot}$ 
 be a set of positions closed under options and sums. That is, if  $G\in \mathbb{S}_{\odot}$ then (i) every position
obtainable from $G$ are also in $\mathbb{S}_{\odot}$; (ii) also,  if $H\in \mathbb{S}_{\odot}$ 
then $G\odot H\in \mathbb{S}_{\odot}$. Equality and inequality in $\mathbb{S}_{\odot}$, are given as in Definition \ref{defn:equality}, except now $X\in \mathbb{S}_{\odot}$. 

\begin{question} What $\mathbb{S}_{\odot}$ have reductions so that $Ex(G\odot H) =Ex(G'\odot H')$?
\end{question}
An interesting and important class of CGT games are the \textit{dead-ending} games, $\mathbb{D}$, which are defined by the property that if a player has no moves in a particular position then there is no sequence of moves that the opponent may make
that will allow the player to move again, see \cite{MilleR2013}. For example, in \textsc{domineering}, if there is no space for Left to place a vertical domino then allowing Right to place any number of horizontal dominoes will not create space for a vertical domino. \textsc{tridomineering} and \textsc{quadromineering} belong to this class.

\begin{question} For each sum, investigate $\mathbb{D}_{\odot}$.
\end{question}

One of the most important results within the theory of combinatorial games is that we understand how to sum games under different rulesets. Naturally, as we extend the theory to simultaneous play, we would like to have a similar theory developed here. For example, let 
\bigskip

\begin{center}$G = OXO \qquad \odot \qquad SQ'(\left\{1\right\},\left\{2\right\})$ on $\underline{4} \qquad\odot \quad{}$
\definecolor{ffqqqq}{rgb}{1.,0.,0.}
\definecolor{ttqqqq}{rgb}{0.2,0.,0.}
\begin{tikzpicture}[line cap=round,line join=round,>=triangle 45,x=1.0cm,y=1.0cm]

\draw [line width=2.pt, dash pattern=on 5pt off 5pt, color=ffqqqq] (0.,1.)-- (0.,0.);
\begin{scriptsize}
\draw [fill=ttqqqq] (0.,0.) circle (1.5pt);
\draw [fill=ttqqqq] (0.,1.) circle (1.5pt);
\end{scriptsize}
\end{tikzpicture}{\,}.
\end{center}
\bigskip

\begin{table}
\begin{center}
\begin{tabular}{|c|c|c|c|}\hline
& $+$&$\wedge$&$\bigtriangledown$ \\\hline
Extended Normal Play&R&R&D\\\hline
Scoring&-1/2&-1&-1/2\\\hline
\end{tabular}
\caption{Outcomes and expected values for $G$ based on different sums.}\label{RulesetSums}
\end{center}
\end{table}

Where do players want to move in $G$ under different sums and models? Table~\ref{RulesetSums} gives the game results (details are left to the reader). Initially, however, it is unclear which option is best given a particular sum and model. Ultimately, we would like to determine a method for combining sums of different rulesets to know the overall result for simultaneous play. 
%---------------------------------------------------------------------------
%---------------------------------------------------------------------------------------------------------------------------------------

\end{document}